\newtheorem{proposition}{Proposition}
\newtheorem{theorem}{Theorem}
\newtheorem{lemma}{Lemma}
\newtheorem{corollary}{Corollary}
\theoremstyle{definition}
\newtheorem{definition}{Definition}
\newtheorem{remark}{Remark}
\newcommand{\xclass}[1]{\langle #1 \rangle}
\newcommand{\xnorm}[1]{ \Vert #1 \Vert }
\newcommand{\zz}[1]{\mathbb #1}
\title[Self-Intersections of
Random  Geodesics]{Self-Intersections of
Random  Geodesics on  Negatively Curved Surfaces}
\author{Steven P. Lalley} \address{University of Chicago\\ Department
of Statistics \\ 5734
University Avenue \\
Chicago IL 60637}
\email{lalley@galton.uchicago.edu}
\date{\today}
\subjclass{Primary 58F17, secondary 53C22, 37D20}
\keywords{geodesic flow, self-intersection, central limit theorem,
multiple mixing}
\thanks{Supported by NSF grant DMS-0405102}
\begin{document}

\begin{abstract}
We study the fluctuations of self-intersection counts of random
geodesic segments of length $t$ on a compact, negatively curved
surface in the limit of large $t$. If the initial direction vector of
the geodesic is chosen according to the \emph{Liouville measure}, then
it is not difficult to show that the number $N (t)$ of
self-intersections by time $t$ grows like $\kappa t^{2}$, where
$\kappa =\kappa_{M}$ is a positive constant depending on the surface
$M$.  We show that (for a smooth modification of $N (t)$) the
fluctuations are of size $t$, and the limit distribution is a weak
limit of Gaussian quadratic forms. We also show that the fluctuations
of \emph{localized}  self-intersection counts (that is, only
self-intersections in a fixed subset of $M$ are counted)  are
typically of size $t^{3/2}$, and the limit distribution is Gaussian.
\end{abstract}

\maketitle

\section{Fluctuations of Self-intersection Counts}\label{sec:introduction}

Choose a point $x$ and a direction $\theta$ at random on a compact,
negatively curved surface $M$, and let $\gamma (t)=\gamma
(t;x,\theta)$ be the (unit speed) geodesic ray in direction $\theta$
started at $x$.  For large $t$ the number $N (t)=N (\gamma [0,t])$ of
transversal\footnote{If the initial point $x$ and direction $\theta$
are chosen randomly (according to the normalized Liouville measure on
the unit tangent bundle) then there is probability $0$ that the
resulting geodesic will be periodic, so with probability $1$ every
self-intersection will necessarily be transversal.}
self-intersections of the geodesic segment $\gamma [0,t]$ will be of
order $t^{2}$; in fact, if $\kappa_{M}= (2\pi |M|)^{-1}$, where $|M|$
denotes the surface area of $M$, then
\begin{equation}\label{eq:lln}
	\lim_{t \rightarrow \infty} N (t)/t^{2} = \kappa_{M}/2
\end{equation}
with probability $1$. See section~\ref{sec:slln} below for the (easy)
proof. Furthermore, the empirical distribution of the self-intersection
points  converges to the uniform distribution on the
surface.  Similar results hold for a randomly chosen \emph{closed}
geodesic \cite{lalley:si1}: If from among all closed geodesics of
length $\leq L$ one is chosen at random, then the number of
self-intersections, normalized by $L^{2}$, will, with probability
approaching one as $L \rightarrow \infty$, be close to $\kappa_{g}$.
These results  have been extended \cite{pollicott-sharp} to the number
and distribution of self-intersections at angles in fixed intervals
$[\alpha ,\beta]$. Closed geodesics with \emph{no} self-intersections
have long been of interest in geometry --- see, for instance,
\cite{birman-series:1,birman-series:2} --- and recently, M. Mirzakhani
\cite{mirzakhani} found the asymptotic growth rate of the number of
simple closed geodesics of length $\leq t$ as $t \rightarrow
\infty$. That this number is not $0$ shows (in view of the Law of
Large Numbers \eqref{eq:lln} ) that there is substantial
variation in the random variable $N (t)$.

The primary objective of this paper is to investigate the
\emph{fluctuations} (second-order asymptotics) of the
self-intersection numbers.  One's first guess might be that these are
of order $t$, and this is indeed the case; however, lest this seem too
obvious we add that if one counts only self-intersections in a (nice)
sub-domain $U\subset M$ then the fluctuations are no longer of order
$t$, but rather $t^{3/2}$. (In fact we will only prove these
statements for smoothed versions of the counts.)  One might also guess
that the rescaled random variable $(N (t)-\kappa_{g}t^{2})/t$ should
converge in distribution as $t \rightarrow \infty$ to a Gaussian
distribution, but this, as we will show, is probably false: the limit
distribution is a weak limit of \emph{Gaussian quadratic forms}. (This
does not preclude the possibility that it is Gaussian, but the
arguments below will make it clear that this is unlikely.)  Weak
limits of Gaussian quadratic forms are known to occur in connection
with stationary processes exhibiting long-range dependence
\cite{taqqu}, \cite{major} (where they are known as \emph{Rosenblatt
distributions}), and also in the connection with certain types of
$U-$statistics \cite{serfling}.

\begin{definition}\label{definition:rosenblatt}
A \emph{Gaussian quadratic form} is a random variable (or its
distribution) of the form $\sum_{j=1}^{m} \theta_{j}Z_{j}^{2}$, where
the random variables $Z_{j}$ are independent, unit Gaussians and
$\theta_{j}$ are real scalars.
\end{definition}

Unfortunately, the study of fluctuations in the self-intersection
numbers $N (t)$ is complicated by the (infrequent) occurrence of
self-intersections at very small angles. We have not yet been able to
successfully resolve the technical issues created by such
self-intersections, and so we will state our main
result not for the counts $N (t)$ but rather for a
smoothed version $N_{\varphi} (t)$ defined as follows. Let $\varphi :
\zz{R} \rightarrow [0,\infty )$ be an \emph{even}, $C^{\infty}$,
nonnegative, $2\pi -$periodic function. For each self-intersection $i$
of the geodesic segment $\gamma [0,t]$, denote by $\theta_{i}\in
[0,2\pi )$ the angle of the self-intersection: in particular, if the
self-intersection occurs at times $0\leq s_{i}<t_{i}\leq t$, then
$\theta_{i}$ is the angle between the tangent vector to $\gamma$ at
$s_{i}$ and the tangent vector at $t_{i}$. Define the \emph{smoothed
self-intersection number}
\begin{equation}\label{eq:Nphi}
	N_{\varphi} (t)=N_{\varphi} (\gamma [0,t])=
	\sum_{i=1}^{N (t)} \varphi (\theta_{i}).
\end{equation}
Clearly, if $\varphi \equiv 1$ then $N_{\varphi} (t)=N (t)$.  We will
prove in section~\ref{sec:slln} that the smoothed self-intersection
numbers satisfy a strong law of large numbers (SLLN) analogous to
\eqref{eq:lln}:
\begin{equation}\label{eq:llnPhi}
	\lim_{t \rightarrow \infty} N_{\varphi}(t) /t^{2}=\kappa_{\varphi}/2
\end{equation}
where
\begin{equation}\label{eq:Kphi}
	\kappa_{\varphi}=\frac{1}{2\pi |M|}\int_{0}^{2\pi} \varphi
	(\theta) |\sin \theta | \,d\theta.
\end{equation}

\begin{theorem}\label{theorem:random}
Assume that the smoothing function $\varphi$ is zero in a neighborhood
$[-\alpha ,\alpha]$ of $0$ and positive in $(\alpha ,\pi -\alpha
)$. If the initial point $x$ and direction $\theta$ are chosen
randomly according to the Liouville measure, then as $t \rightarrow
\infty$,
\begin{equation}\label{eq:random}
	 \frac{N_{\varphi} (t)-\kappa_{\varphi}t^{2}}{t} \Longrightarrow \Psi 
\end{equation}
where $\Rightarrow$ indicates weak convergence to a distribution $\Psi
$ in the weak closure of the set of Gaussian quadratic forms. This
limiting distribution may depend on both the surface $M$ and the
smoothing function $\varphi$.
\end{theorem}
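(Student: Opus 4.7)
First I would write $N_\varphi(t)$ as a double integral along the flow. Let $\mu$ denote the normalized Liouville measure on the unit tangent bundle $SM$, let $\pi\colon SM\to M$ be the footpoint projection, and let $\Phi_s$ be the geodesic flow, so that $v\in SM$ is the initial tangent vector of $\gamma$. Using the standard Jacobian factor $|\sin\theta|$ for the map from crossing times to points of $M$,
\begin{equation*}
  N_\varphi(t) \;=\; \lim_{\epsilon\downarrow 0}\int_0^t\!\!\int_0^{t'} K_\epsilon(\Phi_s v,\Phi_{t'}v)\,ds\,dt',
\end{equation*}
where $K_\epsilon$ is an $\epsilon$-regularization of the distribution on $SM\times SM$ concentrated on $\{\pi(v)=\pi(w)\}$ with density $\varphi(\theta_{vw})|\sin\theta_{vw}|$ and $\theta_{vw}$ is the angle between $v$ and $w$ at their common footpoint. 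A direct computation of the mean identifies $\kappa_\varphi t^2$ with the leading order of $\iint \bar K_\epsilon\,ds\,dt'$ for $\bar K_\epsilon:=\iint K_\epsilon\,d\mu\otimes d\mu$, so after centring the task is to analyse the weak limit of $t^{-1}\iint [K_\epsilon-\bar K_\epsilon](\Phi_s v,\Phi_{t'}v)\,ds\,dt'$.

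Next I would perform a Hoeffding-type decomposition $K_\epsilon(v,w)=\bar K_\epsilon+L(v)+L(w)+R(v,w)$, where $L(v):=\int K_\epsilon(v,\cdot)\,d\mu-\bar K_\epsilon$ and $R$ is doubly centred. The crucial observation --- and the reason the fluctuations are of order $t$ rather than $t^{3/2}$ --- is that $L\equiv 0$: the integral $\int K_\epsilon(v,\cdot)\,d\mu$ reduces (as $\epsilon\downarrow 0$) to an integral of $\varphi(\theta)|\sin\theta|/(2\pi|M|)$ over the unit circle in the fiber $\pi^{-1}(\pi(v))$, whose value depends only on the relative angle and hence is independent of $v$. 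Were $L$ nonzero, the CLT for Anosov flows applied to $\int_0^t L(\Phi_s v)\,ds$ would propagate through the outer integration to produce fluctuations of order $t^{3/2}$ (exactly the mechanism behind the localized statement mentioned in the abstract); its vanishing reduces us to the quadratic remainder $\iint R(\Phi_s v,\Phi_{t'}v)\,ds\,dt'$.

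To handle that remainder I would pass to a Markov partition / Bowen--Ratner symbolic coding of the geodesic flow and expand $R$ spectrally via the Ruelle transfer operator as $R(v,w)=\sum_{j,k} c_{jk}\psi_j(v)\psi_k(w)$, with $\{\psi_j\}$ an orthonormal system of zero-mean H\"older observables in $L^2(\mu)$. For any finite truncation $R^{(N)}$ the Anosov CLT yields $t^{-1/2}\int_0^t\psi_j(\Phi_s v)\,ds\Rightarrow Z_j$ with $(Z_j)_{j\le N}$ jointly Gaussian of covariance $\Sigma$ prescribed by the Green--Kubo formula, so
\begin{equation*}
  t^{-1}\iint R^{(N)}(\Phi_s v,\Phi_{t'}v)\,ds\,dt' \;\Longrightarrow\; \sum_{j,k\le N} c_{jk}Z_j Z_k .
\end{equation*}
Simultaneously diagonalising the symmetric matrix $(c_{jk})$ against the positive matrix $\Sigma$ rewrites this limit as $\sum_j \theta_j^{(N)} Y_j^2$ with independent unit Gaussians $Y_j$, i.e.\ as an honest Gaussian quadratic form in the sense of Definition \ref{definition:rosenblatt}. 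The distribution $\Psi$ is then obtained by sending $N\to\infty$ (and $\epsilon\downarrow 0$), and so lies in the weak closure of that class by construction.

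The principal obstacle will be the quantitative estimates needed to justify both limits: a uniform-in-$N$ bound in probability on $t^{-1}\iint(R-R^{(N)})(\Phi_s v,\Phi_{t'}v)\,ds\,dt'$, and the commutation of the regularisation limit $\epsilon\downarrow 0$ with the weak limit in $t$. Both require sharp decay-of-correlations estimates for $\Phi_s$, and it is precisely at near-tangential crossings that these break down --- a small-angle self-intersection corresponds to a long segment where the geodesic shadows itself, so the two arguments of $K_\epsilon$ fail to decorrelate and the second moments of the remainder blow up. This is why the hypothesis that $\varphi$ vanishes on $[-\alpha,\alpha]$ is genuinely needed and not merely convenient.
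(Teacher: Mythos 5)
Your overall architecture coincides with the paper's: smooth the intersection count into a double integral of a kernel along the flow, observe that the linear (Hoeffding) term vanishes because $\int K_{\delta}(u,\cdot)\,d\nu_{L}$ is constant in $u$ (this is Lemma~\ref{lemma:H1} and Property (P1)), approximate the doubly centered remainder by finite-rank kernels, and apply the CLT for the geodesic flow to each finite-rank piece to obtain Gaussian quadratic forms. But the step you defer as ``the principal obstacle'' --- a bound on $t^{-1}\iint (R-R^{(N)})$ that is uniform in $t$ --- is the actual mathematical content of the theorem, and the route you propose to it (a Hilbert--Schmidt or transfer-operator expansion $R=\sum_{j,k}c_{jk}\psi_{j}\otimes\psi_{k}$) is one the paper explicitly rejects: that series converges only in $L^{2}(\nu_{L}\times\nu_{L})$, which gives no control along an orbit because the law of $(\gamma(s),\gamma(t))$ is \emph{singular} with respect to $\nu_{L}\times\nu_{L}$, and Mercer's theorem is unavailable since neither $H_{\delta}$ nor $K_{\delta}$ is positive semi-definite. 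The paper instead approximates the centered kernel in $C^{m}$ norm by elementary kernels built from localized Fourier series, with the quantitative coefficient decay $|a_{i,j}|\leq C\xnorm{g}_{2m}(i+j)^{-m}$ of Lemma~\ref{lemma:quantApprox}; the uniform bound you need is then Lemma~\ref{lemma:chebyshev}, $\zz{E}\bigl(\iint_{[0,T]^{2}} r\bigr)^{2}\leq CT^{2}\xnorm{r}_{2m}^{2}$, whose proof requires not merely ``decay of correlations'' but a quantitative \emph{four-point} mixing estimate for the geodesic flow, $\bigl|\zz{E}\prod_{j=1}^{4}F_{j}(\gamma(t_{j}))\bigr|\leq C\min\bigl(e^{-A(t_{2}-t_{1})},e^{-A(t_{4}-t_{3})}\bigr)\prod_{j}\xnorm{F_{j}}_{m}$ (Theorem~\ref{theorem:mixing-elementary}, an extension of Dolgopyat's two-point bound). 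Without that multiple-mixing input, neither your truncation in $N$ nor the passage from the exact kernel to its finite-rank approximation can be justified, and this is precisely where the proof lives.

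Two smaller gaps. First, your identity $N_{\varphi}(t)=\lim_{\epsilon\downarrow 0}\iint K_{\epsilon}$ hides boundary terms near the endpoints of $[0,t]$ and an interchange of $\epsilon\downarrow 0$ with $t\to\infty$; the paper works at a fixed small $\delta$ and shows that the discrepancy between $N_{\varphi}(t)$ and $\tfrac12\iint_{[0,t]^{2}}K_{\delta}$ is $o(t)$ in $L^{1}$ uniformly in $t$, via an averaging trick combined with the SLLN (Lemma~\ref{lemma:errorBoundsFinal}); some such argument is needed since the target fluctuations are themselves only of order $t$. Second, the hypothesis that $\varphi$ vanish on $[-\alpha,\alpha]$ enters the paper solely to make $K_{\delta}$ extend to a $C^{\infty}$ kernel on all of $SM\times SM$ (Lemma~\ref{lemma:well-def}): as $(u,v)$ approaches the boundary of $\mathcal{D}$ the crossing angle tends to $0$, so the kernel vanishes there and the extension is smooth. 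Your explanation in terms of blow-up of second moments at near-tangential crossings is a reasonable heuristic for why small angles are dangerous, but it is not where the hypothesis is actually used.
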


We have not been able to prove that the limit distribution $\Psi$ is
nondegenerate, nor that it is non-Gaussian, but this seems unlikely
(see Section~\ref{ssec:heuristics} below).  
It is naturally of interest to consider also fluctuations in the
empirical distribution of self-intersection points. Let $f:M
\rightarrow \zz{R}$ be any continuous function on the surface $M$. For
each self-intersection $i$ of the geodesic segment $\gamma [0,t]$,
denote by $\theta_{i}\in [-\pi ,\pi]$ the angle and $x_{i}\in M$ the
location of the self-intersection. Define the $f-$localized
self-intersection counts
\begin{equation}\label{eq:f-local}
	N_{\varphi ;f} (t) =N_{\varphi ;f} (\gamma [0,t])=
	\sum_{i=1}^{N (t)} f (x_{i})\varphi (\theta_{i}). 
\end{equation}
As for the global self-intersection counts, the localized
self-intersection random variables $N_{\varphi ;f} (t)$ obey a strong
law of large numbers: For $\nu_{L}-$almost every initial direction
vector, 
\begin{equation}\label{eq:lln-local}
	\lim_{t \rightarrow \infty} N_{\varphi ;f}
	(t)/t^{2}=\frac{\kappa_{\varphi}}{2} \int_{M}f (x) \,dx:=A_{\varphi;f} ,
\end{equation}
where $dx$ indicates the normalized surface area measure on the
surface $M$. See section~\ref{ssec:local-kernels} for the proof. What  is
interesting  about the localized self-intersection counts is that
their fluctuations are of a different order of magnitude than those of
the global variables $N_{\varphi} (t)$, at least for  functions
$f$ of small support:

\begin{theorem}\label{theorem:local}
For any compact, negatively curved surface $M$ there is a constant
$\varepsilon >0$ such that the following is true: If the smoothing
function $\varphi$ satisfies the hypotheses of
Theorem~\ref{theorem:random}; if $f:M \rightarrow \zz{R}$ is any
$C^{\infty}$, nonnegative function that is not identically $0$ and
whose support has diameter $<\varepsilon $; and if the initial
direction vector $\gamma (0)$ is chosen randomly according to the
Liouville measure, then for some $\sigma =\sigma_{\varphi ;f}>0$,
\begin{equation}\label{eq:clt-local}
	\frac{N_{\varphi ;f} (t)-A_{\varphi;f}  t^{2}}{\sigma t^{3/2}} \Longrightarrow \Phi 	
\end{equation}
as $t \rightarrow \infty$, where $\Phi$ is the standard (mean $0$,
variance $1$) Gaussian distribution.
\end{theorem}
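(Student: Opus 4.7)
The plan is to write $N_{\varphi;f}(t)$ as a symmetric double time-integral along the geodesic orbit, Hoeffding-decompose the resulting kernel on $T^1M\times T^1M$, and show that only the linear piece in the decomposition contributes at order $t^{3/2}$. Concretely, lifting to the unit tangent bundle with $v_s=\phi_s v_0$ and $\pi v_s=\gamma(s)$, the coarea formula gives formally
$$N_{\varphi;f}(t)=\tfrac{1}{2}\iint_{[0,t]^2} H(v_s,v_u)\,ds\,du,$$
where $H(v,w)$ is supported on the diagonal $\{\pi v=\pi w\}$ and carries weight $f(\pi v)\,|\sin\theta|\,\varphi(\theta)$ with $\theta=\angle(v,w)$. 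This distributional kernel should be regularized by a smooth mollifier $H_\epsilon$ of width $\epsilon$, the $\epsilon\downarrow 0$ limit being controlled by a separate approximation argument that exploits the angular cutoff in $\varphi$ near $0$ and $\pi$ (where $|\sin\theta|$ degenerates). Decompose symmetrically $H=\bar H+H_1(v)+H_1(w)+H_2(v,w)$ with $\int H_1\,dm=0$ and $\int H_2(v,\cdot)\,dm=0$ for the normalized Liouville measure $m$. A direct computation gives $\bar H/2=A_{\varphi;f}$, and because the inner integration in $w$ first localizes $\pi w=\pi v$ and then averages $|\sin\theta|\varphi(\theta)$ freely over the direction of $w$,
$$H_1(v)=\kappa_\varphi\bigl(f(\pi v)-\textstyle\int_M f\,dx\bigr).$$
This is the essential contrast with Theorem~\ref{theorem:random}: when $f\equiv 1$, $H_1$ vanishes identically, leaving only the quadratic part and yielding the quadratic-form limit; localization breaks that cancellation and produces a non-trivial linear part.

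Substituting the decomposition gives
$$N_{\varphi;f}(t)-A_{\varphi;f}t^2 \;=\; t\int_0^t H_1(v_s)\,ds \;+\;\tfrac{1}{2}\iint_{[0,t]^2}H_2(v_s,v_u)\,ds\,du.$$
The first term, divided by $\sigma t^{3/2}$, equals $(\sigma\sqrt t)^{-1}\int_0^t H_1(v_s)\,ds$; applying the central limit theorem for the geodesic flow on a negatively curved surface (Ratner, Denker--Philipp, or via a Markov partition and the Gordin--Livsic martingale CLT on the resulting suspension) yields weak convergence under the Liouville measure to $\mathcal{N}(0,1)$, provided
$$\sigma^2 \;=\; \int_{-\infty}^{\infty}\int_{T^1M} H_1(v)\,H_1(\phi_s v)\,dm(v)\,ds \;>\;0.$$
Positivity follows from the Livsic theorem: $H_1$ would be a H\"older coboundary only if $\int_\Gamma H_1=0$ for every closed geodesic $\Gamma$, but for $f$ supported in a ball of sufficiently small diameter $<\varepsilon$ one can find closed geodesics $\Gamma$ missing $\mathrm{supp}(f)$ entirely, for which $\int_\Gamma H_1=-\kappa_\varphi\bar f\,L(\Gamma)\neq 0$. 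The constant $\varepsilon$ in the theorem is chosen so that such avoiding geodesics exist for every admissible support.

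The main obstacle is the quadratic remainder: one must show
$$\iint_{[0,t]^2} H_2(v_s,v_u)\,ds\,du \;=\; o_{\mathbb{P}}(t^{3/2}).$$
Since $H_2$ has zero marginals, expanding its second moment into a quadruple integral and applying exponential decay of correlations for the geodesic flow (Dolgopyat, Liverani) to configurations with well-separated time-pairs should give a variance bound of order $t^2$, hence a remainder of size $O_{\mathbb{P}}(t)$, which is an order below $t^{3/2}$. The delicate regime is the near-diagonal one where $|s-u|$ is small and the distributional singularity of $H_2$ is felt; here I would pass to a symbolic model via a Markov partition and count joint returns to small geometric boxes, using the small-diameter hypothesis on $\mathrm{supp}(f)$ together with the angular cutoff in $\varphi$ to dominate the singular contributions. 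This control of the quadratic part is the technical heart of the proof, and the place where negative curvature (exponential mixing), symbolic coding, the angular hypothesis on $\varphi$, and the small-support hypothesis on $f$ must all be brought to bear simultaneously.
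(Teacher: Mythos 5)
Your proposal follows essentially the same route as the paper: mollify the localized intersection kernel to a smooth symmetric kernel, perform the Hoeffding decomposition (the paper's $\tilde{k}_{\delta}$ is your $H_{2}$ and $\kappa_{\varphi}f_{\delta}$ is your $H_{1}$ up to an additive constant), establish non-degeneracy of the linear part via Livsic's theorem and a closed geodesic avoiding the small support of $f$, and dispose of the quadratic remainder with a second-moment bound of order $t^{2}$. The one step you leave open --- bounding $\zz{E}\bigl(\iint H_{2}\bigr)^{2}$ --- is exactly what the paper supplies by approximating the centered kernel by elementary kernels $\sum a_{ij}\varphi_{i}(x)\varphi_{j}(y)$ with quantitative coefficient decay and then invoking a four-point exponential mixing estimate for the geodesic flow (Theorem~\ref{theorem:mixing-elementary}); ordinary two-point decay of correlations alone does not suffice for the quadruple integral.
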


Simple lower bounds for $\varepsilon >0$ will be given in
Section~\ref{sec:local}, where Theorem~\ref{theorem:local} will be
proved.  Theorem~\ref{theorem:random} will be proved in
section~\ref{sec:weakConvergence}, using an extension of a mixing
result of Dolgopyat \cite{dolgopyat}. The proof relies on a
representation of the self-intersection counts in terms of what we dub
the \emph{intersection kernels} for the geodesic flow; these are
studied in section~\ref{sec:kernels}. For completeness, we present
proofs of the strong laws of large numbers \eqref{eq:lln} and
\eqref{eq:llnPhi} in section~\ref{sec:slln}, and of
\eqref{eq:lln-local} in section~\ref{ssec:local-kernels} .

\section{Intersection Kernel}\label{sec:kernels}

\subsection{The intersection kernel}\label{ssec:i-kernel} Geodesics on
any surface\footnote{Here and throughout the paper the term
\emph{geodesic} will be used either to indicate a geodesic path in the
surface $M$ or its lift to the unit tangent bundle $SM$; the meaning
should be clear from context. Two geodesic segments will be said to
intersect if their projections to the surface $M$ intersect.},
regardless of its curvature, look locally like straight
lines. Consequently, for any \emph{compact} surface $M$ with smooth
Riemannian metric there exists $\varrho >0$ such that if $\alpha$ and
$\beta $ are geodesic segments of length $\leq \varrho$ then $\alpha$
and $\beta$ intersect transversally, if at all, in at most one
point. Thus, if $\gamma$ is not periodic then the smoothed
self-intersection number $N_{\varphi } (L)=N_{\varphi }(\gamma [0,L])$
can be computed by partitioning $\gamma[0,L]$ into nonoverlapping
segments of common length $\delta \leq \varrho$ and counting the
number of pairs that intersect transversally.  Let $\alpha_{i}$ and
$\alpha_{j}$ be two such segments; then the event that these segments
intersect is completely determined by their initial points and
directions, as is the angle of intersection.

\begin{definition}\label{definition:intersectionKernel}
The \emph{intersection kernel} $H_{\delta}:SM\times SM \rightarrow
\zz{R}_{+}$ is the nonnegative function that takes the value
$H_{\delta} (u,v)=\varphi (\theta)$ if the geodesic segments of length
$\delta$ with initial tangent vectors $u$ and $v$ intersect
transversally at angle $\theta$, and $H_{\delta} (u,v)=0$ otherwise.
\end{definition}

The dependence on the smoothing function $\varphi$ is suppressed, as
$\varphi$ will be fixed throughout the paper.  Because $\varphi$ is
assumed to be even and $\pi -$periodic, $H_{\delta}$ is
symmetric in its arguments $u,v$. The intersection kernel 
determines the smoothed self-intersection numbers as follows: If
$L=n\delta $ is an integer multiple of $\delta$, then for any geodesic
$\gamma $,
\begin{equation}\label{eq:u-statistic}
	N_{\varphi} (L)= N_{\varphi} (\gamma[0,L])=\frac{1}{2} \sum_{i=1}^{n} \sum_{j=1}^{n}
	H_{\delta} (\gamma (i\delta ), \gamma (j\delta)).
\end{equation}
The factor of $1/2$ compensates for the double-counting that results
from letting both indices of summation $i,j$ range over all $n$
geodesic segments.  Note that the diagonal terms $H_{\delta} (\gamma
(i\delta ),\gamma (i\delta ))$ in this sum are all $0$, because the
segment $\gamma (i\delta )$ does not intersect itself
{transversally}.

\subsection{The associated integral
operators}\label{ssec:integralOperator}
The intersection kernel  $H_{\delta } (u,v)$ is symmetric in its
arguments and Borel measurable, but not continuous, because 
self-intersections can be created or destroyed by small perturbations
of the initial vectors $u,v$.  Nevertheless,  $H_{\delta}$ induces a
self-adjoint integral operator on the Hilbert space $L^{2} (\nu_{L})$ by
\begin{equation}\label{eq:kernel}
	H_{\delta} \psi (u)=\int_{v\in SM} H_{\delta} (u,v) \psi (v)
	\,d\nu_{L} (v). 
\end{equation}

\begin{lemma}\label{lemma:H1}
For all sufficiently small $\delta >0$,
\begin{equation}\label{eq:H1}
	H_{\delta} 1(u):
	 =\int H_{\delta} (u,v)\,d\nu_{L} (v) =
	 \delta^{2}\kappa_{\varphi}
\end{equation}
for all $u\in M$. Thus, the constant function $1$ is an eigenfunction
of the operator $H_{\delta}$, and consequently the normalized kernel
$H_{\delta} (u,v)/\delta^{2}\kappa_{\varphi}$ is a Markov kernel.
\end{lemma}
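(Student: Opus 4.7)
The plan is to evaluate $H_\delta 1(u)=\int_{SM}H_\delta(u,v)\,d\nu_L(v)$ by changing variables from $v\in SM$ to the intersection parameters $(s,t,\theta)$: here $s\in[0,\delta]$ is the time at which $\gamma_u$ reaches the intersection, $t\in[0,\delta]$ is the corresponding time along $\gamma_v$, and $\theta\in[0,2\pi)$ is the oriented angle from $\dot\gamma_u(s)$ to $\dot\gamma_v(t)$. The hypothesis $\delta\le\varrho$ from Section~\ref{ssec:i-kernel} ensures that for each $v$ whose segment meets $\gamma_u([0,\delta])$ transversally this triple is uniquely determined, so the change of variables is a diffeomorphism off the null locus $\theta\in\{0,\pi\}$, on which $\varphi$ vanishes by hypothesis.

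The inverse of the change of variables is the map $\Psi(s,\theta,t)=g_{-t}\bigl(\gamma_u(s),R_\theta\dot\gamma_u(s)\bigr)$, where $g_r$ is the geodesic flow and $R_\theta$ rotates unit tangent vectors by $\theta$. The image of $\tilde\Psi(s,\theta)=(\gamma_u(s),R_\theta\dot\gamma_u(s))$ is a two-dimensional section $\Sigma_0$ of $SM$, transversal to the geodesic spray $X$ except where $\theta\in\{0,\pi\}$, and $\Psi$ flows this section backward for time $t\in[0,\delta]$. Liouville's theorem for the geodesic flow gives the flow-box decomposition $\Psi^*d\nu_L=dt\wedge\sigma_\Sigma$, where $\sigma_\Sigma$ is the absolute value of the pullback of $\iota_X d\nu_L$ to $\Sigma_0$; this is the step that eliminates any curvature correction along the flow direction.

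To compute $\sigma_\Sigma$, I would use Fermi coordinates $(s,r)$ along $\gamma_u$, in which the metric takes the form $ds^2+G(s,r)\,dr^2$ with $G(s,0)\equiv 1$, so $d\nu_L=\frac{1}{2\pi|M|}\sqrt{G}\,ds\,dr\,d\phi$ and $X|_{r=0}=\cos\phi\,\partial_s+\sin\phi\,\partial_r+c(s,\phi)\,\partial_\phi$ for some function $c$. Contracting and restricting to $\{r=0\}$, where $dr$ vanishes, kills every term except $-\sin\phi\,ds\wedge d\phi$, so $\sigma_\Sigma=\frac{|\sin\theta|}{2\pi|M|}\,ds\,d\theta$ and therefore $\Psi^*d\nu_L=\frac{|\sin\theta|}{2\pi|M|}\,ds\,d\theta\,dt$. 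Substituting gives
\[
H_\delta 1(u)=\int_0^\delta\!\int_0^{2\pi}\!\int_0^\delta\varphi(\theta)\,\frac{|\sin\theta|}{2\pi|M|}\,dt\,d\theta\,ds=\delta^2\kappa_\varphi,
\]
and the Markov-kernel assertion follows by dividing $H_\delta(u,v)$ by $\delta^2\kappa_\varphi$.

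The principal obstacle is the Jacobian computation. The factor $|\sin\theta|$ is elementary for two straight segments meeting at angle $\theta$ in the plane, but on a curved surface one must argue that no curvature correction survives in the final identity. Expressing the $v$-integral as a flow-box integral and using $\iota_X d\nu_L$ to extract the transversal measure is what makes the formula exact rather than only correct to leading order in $\delta$; the Fermi-coordinate representation reduces the computation to a single contraction because $\sqrt{G}=1$ along $\gamma_u$.
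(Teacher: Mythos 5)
Your proof is correct, but it takes a genuinely different route from the paper's. The paper argues in two stages: first the soft ``rhombus'' approximation, which only gives $\int H_{\delta}(u,v)\,d\nu_{L}(v)\sim\delta^{2}\kappa_{\varphi}$ uniformly in $u$ as $\delta\to 0$; then an exact subdivision identity $H_{\delta}(u,v)=\sum_{i,j}H_{\delta/m}(\cdot,\cdot)$ (valid because subsegments intersect at most once), which after integrating in $v$ and using invariance of $\nu_{L}$ under the geodesic flow yields $H_{\delta}1(u)=\sum_{i}mH_{\delta/m}1(\gamma(i\delta/m;u))$; letting $m\to\infty$ upgrades the asymptotic to exact equality without ever computing a Jacobian. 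You instead compute the integral exactly in one step, by parametrizing the relevant $v$'s via $\Psi(s,\theta,t)=g_{-t}(\gamma_{u}(s),R_{\theta}\dot\gamma_{u}(s))$, invoking the flow-box decomposition $\Psi^{*}d\nu_{L}=dt\wedge|\iota_{X}d\nu_{L}|_{\Sigma_{0}}$ (which is the correct form of Liouville invariance here, since $g_{t}$ preserves both $X$ and $d\nu_{L}$ and hence the flux form on transversal sections), and evaluating the flux form in Fermi coordinates along $\gamma_{u}$, where $\sqrt{G}\equiv 1$ on $\{r=0\}$ and the restriction to $\Sigma_{0}$ kills all terms of $\iota_{X}(ds\wedge dr\wedge d\phi)$ except $-\sin\phi\,ds\wedge d\phi$. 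This checks out: injectivity of $\Psi$ off the null locus is exactly the at-most-one-intersection property for $\delta\le\varrho$, and your argument makes explicit \emph{why} no curvature correction survives (the flux form only sees the metric along $\gamma_{u}$ itself). What each approach buys: yours is a single exact computation that also reproves the leading-order asymptotic \eqref{eq:sim} as a byproduct, at the price of a differential-geometric calculation; the paper's is coordinate-free and needs only the qualitative uniform asymptotic plus flow-invariance, which is why it transfers with no extra work to the smoothed kernels $K_{\delta}$ (property (P1)). One small remark: your appeal to ``$\varphi$ vanishes at $\theta\in\{0,\pi\}$'' is not needed (and Lemma~\ref{lemma:H1} does not assume it) --- the tangential locus is already a $\nu_{L}$-null set of $v$'s and a Lebesgue-null set of parameters, and $H_{\delta}$ is defined to vanish on non-transversal configurations.
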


\begin{remark}\label{remark:Hd}
The integral $H_{\delta}1 (u)$ is the expectation of $\varphi
(\theta)$ where $\theta$ is the angle in which a randomly
chosen geodesic segment of length $\delta$  intersects the geodesic
segment of length $\delta$ with initial tangent vector $u$. The
assertion of the lemma is that this expectation does not depend on the
initial tangent vector $u$, even for surfaces $M$ of variable negative
curvature. 
\end{remark}

\begin{proof}
Denote by $\alpha =\gamma ( [0,\delta];u)$ the geodesic segment of
length $\delta$ with initial tangent vector $u$.  For small $\delta
>0$ and fixed angle $\theta$, the set of points $x\in S$ such that a
geodesic segment of length $\delta$ with initial base point $x$
intersects $\alpha $ at angle $\theta $ is approximately a
rhombus of side $\delta$ with an interior angle $\theta$. The area of
such a rhombus is $\delta^{2}|\sin \theta |$. Hence, as $\delta \rightarrow 0$,
\begin{equation}\label{eq:sim}
	\int H_{\delta} (u,v)\,d\nu_{L} (v) \sim
	\delta^{2}\int_{0}^{2\pi} \varphi (\theta) |\sin \theta | \,d\theta / (
	2\pi |M|)=\delta^{2} \kappa_{\varphi},
\end{equation}
and the relation $\sim$ holds uniformly for $u\in SM$.

It remains to show that the approximate equality $\sim$ is actually an
equality for small $\delta >0$, equivalently, that the value of the
integral stabilizes as $\delta \rightarrow 0$. Assume that $\delta >0$
is sufficiently small that any two distinct geodesic segments of
length $\delta$ intersect transversally at most once. Consider the
geodesic segments of length $\delta$ with initial direction vectors
$u$ and $v$. For any  integer $m\geq 2$, each of these segments
can be partitioned into $m$ nonoverlapping sub-segments (each open on
one end and closed on the other) of length $\delta /m$. At most one
pair of these constituent sub-segments can intersect; hence,
\[
	 H_{\delta} (u,v) =\sum_{i=0}^{m-1} \sum_{j=0}^{m-1}
	 	    H_{\delta /m} (\gamma (i\delta ;u),\gamma (j\delta
		    ;v)) .	
\]
Integrating over $v$ with respect to the Liouville measure $\nu_{L}$
and using the invariance of $\nu_{L}$ relative to the geodesic flow
we obtain that
\[
	H_{\delta}1 (u)
	= \sum_{i=0}^{m-1} mH_{\delta /m}1 (\gamma (i\delta ;u)).
\]
Let $m \rightarrow \infty$ and use the approximation \eqref{eq:sim}
(with $\delta$ replaced by $\delta /m$); since this approximation
holds uniformly, it follows that $H_{\delta}1 (u)=\delta^{2}\kappa_{S}$.
\end{proof}

\begin{lemma}\label{lemma:compactness}
For each $\delta >0$ sufficiently small, the integral operator
$H_{\delta}$ on $L^{2} (\nu_{L})$ is compact.
\end{lemma}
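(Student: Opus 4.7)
The plan is to prove something stronger, namely that $H_\delta$ is a Hilbert--Schmidt operator on $L^{2}(\nu_{L})$; since every Hilbert--Schmidt operator is compact, this suffices. Recall that an integral operator with Borel-measurable kernel $K$ on a finite measure space $(X,\mu)$ is Hilbert--Schmidt precisely when $K \in L^{2}(\mu\times\mu)$, with $\|K\|_{HS}^{2} = \iint K(u,v)^{2}\,d\mu(u)\,d\mu(v)$. So the task reduces to verifying that this double integral is finite for $K=H_{\delta}$.

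This is easy to do using Lemma \ref{lemma:H1}. The smoothing function $\varphi$ is continuous and $2\pi$-periodic, hence bounded by some constant $M_{\varphi} := \|\varphi\|_{\infty} < \infty$. By construction $0 \le H_{\delta}(u,v) \le M_{\varphi}$, so pointwise $H_{\delta}(u,v)^{2} \le M_{\varphi}\, H_{\delta}(u,v)$. Therefore, using Fubini (the kernel is jointly Borel and nonnegative) together with Lemma \ref{lemma:H1},
\begin{equation*}
\iint H_{\delta}(u,v)^{2}\, d\nu_{L}(u)\, d\nu_{L}(v)
   \le M_{\varphi} \int_{SM} \Bigl( \int_{SM} H_{\delta}(u,v)\, d\nu_{L}(v) \Bigr) d\nu_{L}(u)
   = M_{\varphi}\, \delta^{2} \kappa_{\varphi}\, \nu_{L}(SM),
\end{equation*}
which is finite. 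Hence $H_{\delta}$ is Hilbert--Schmidt and therefore compact.

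There is no serious obstacle here: the only subtlety is that $H_{\delta}$ is not continuous (jumps occur along the codimension-one set of pairs $(u,v)$ where a transverse intersection is created or destroyed), but Borel measurability and boundedness are all that is needed to apply the Hilbert--Schmidt criterion, and Lemma \ref{lemma:H1} does the rest. In particular, the smallness condition on $\delta$ is inherited directly from Lemma \ref{lemma:H1}, so the same range of $\delta$ for which that lemma holds works here.
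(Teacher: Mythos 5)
Your proof is correct, but it takes a genuinely different route from the paper's. The paper argues compactness by showing that the map $u \mapsto H_{\delta}(u,\cdot)$ is continuous from the compact space $SM$ into $L^{2}(\nu_{L})$ (small perturbations of $u$ change the set of intersecting directions only on a set of small measure, away from near-tangential angles), which yields compactness of the operator via approximation by finite-rank operators. You instead verify the Hilbert--Schmidt criterion directly: since $0 \le H_{\delta} \le \|\varphi\|_{\infty}$ and $\nu_{L}$ is a finite measure, $H_{\delta} \in L^{2}(\nu_{L}\times\nu_{L})$ — indeed your invocation of Lemma~\ref{lemma:H1} is not even needed, as $\iint H_{\delta}^{2}\,d\nu_{L}\,d\nu_{L} \le \|\varphi\|_{\infty}^{2}\,\nu_{L}(SM)^{2} < \infty$ already. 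The one input you use without proof is joint Borel measurability of $H_{\delta}$, but the paper itself asserts this at the start of Section~\ref{ssec:integralOperator}, so that is fair. Your argument is shorter and avoids the somewhat delicate geometric estimate in the paper's proof; it also delivers strictly more, namely that $H_{\delta}$ is of Hilbert--Schmidt class, which is what actually justifies the paper's subsequent claim that the eigenvalue sequence $(\lambda_{j})$ is square-summable (mere compactness would not give that). The paper's approach, on the other hand, gives the extra information that $u\mapsto H_{\delta}(u,\cdot)$ is $L^{2}$-continuous, which implies, for instance, that $H_{\delta}$ maps $L^{2}$ into the continuous functions; but that is not needed for the lemma as stated.
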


\begin{proof}
If the kernel $H_{\delta} (u,v)$ were jointly continuous in its
arguments $u,v$ then this would follow by standard results about
integral operators --- see, e.g., \cite{widom}. Since $H_{\delta}$ is
not continuous, these standard results do not apply; nevertheless, the
argument for compactness is elementary.  It suffices to show that the
mapping $u \mapsto H_{\delta} (u,\cdot)$ is continuous relative to the
$L^{2}-$norm. Take $u,u' \in SM$, and let $\alpha ,\alpha '$ be the
geodesic segments of length $\delta$ started at $u,u'$, respectively.
If $u,u'$ are close, then the geodesic segments $\alpha ,\alpha '$ are
also close. Hence, for all but very small angles $\theta$ the set of
points $x\in M$ such that a geodesic segment of length $\delta$ with
initial base point $x$ intersects $\alpha$ at angle $\theta $ but does
not intersect $\alpha '$ is small. Consequently, the functions
$H_{\delta} (u,\cdot)$ and $H_{\delta} (u',\cdot)$ differ on a set of
small measure.
\end{proof}

Lemma~\ref{lemma:compactness} implies that the  Hilbert-Schmidt
theory applies. In particular, the non-zero spectrum of $H_{\delta}$
consists of isolated real eigenvalues $\lambda_{j}$ of finite
multiplicity (and listed according to multiplicity). The
corresponding (real) eigenfunctions $\psi_{j}$ can be chosen so as
to consititute an orthonormal basis of $L^{2} (\nu_{L})$, and the
eigenvalue sequence $\lambda_{j}$ is square-summable.

\begin{lemma}\label{lemma:mixing}
The Markov kernel $\bar{H}_{\delta}:=H_{\delta}/\delta^{2}\kappa_{\varphi}$
satisfies the \emph{Doeblin condition}: There exist an integer $n\geq
1$ and a positive real number $\varepsilon$ such that
\begin{equation}\label{eq:doeblin}
	\bar{H}^{n}_{\delta} (u,v)\geq \varepsilon  \quad \text{for
	all} \;\; u,v\in SM.
\end{equation}
\end{lemma}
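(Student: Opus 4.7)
The plan is to show that, for every pair $(u,v)\in SM\times SM$, one can construct a chain of intermediate tangent vectors $w_1,\ldots,w_{n-1}$ whose consecutive length-$\delta$ geodesic segments cross transversally at angles strictly inside the support of $\varphi$, and then extract a uniform pointwise lower bound via a compactness argument on $SM\times SM$.

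First, I would fix a small $\eta>0$ and set $c_0:=\min\{\varphi(\theta):\theta\in[\alpha+\eta,\pi-\alpha-\eta]\}>0$. Whenever two length-$\delta$ segments cross at an angle in this interval, the kernel value satisfies $\bar{H}_\delta(\cdot,\cdot)\geq c_0/(\delta^2\kappa_\varphi)$. The key geometric step is to prove that there exists an integer $n=n(\delta,M)$ such that for every $(u,v)$ one can find $w_1,\ldots,w_{n-1}\in SM$, with $w_0:=u$ and $w_n:=v$, such that every consecutive pair $(w_{i-1},w_i)$ realizes a transverse intersection at an angle in $(\alpha+2\eta,\pi-\alpha-2\eta)$. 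To construct such a chain, I would join the base points of $u$ and $v$ by a continuous path in $M$ of length at most $\mathrm{diam}(M)+\delta$, space $n-1$ points $x_1,\ldots,x_{n-1}$ along it with gaps at most $\delta/4$, and at each site choose a direction so that $\alpha_{w_{i-1}}$ meets $\alpha_{w_i}$ at an angle near $\pi/2$. Since geodesic segments of length $\delta$ are approximately Euclidean on scales well below the injectivity radius, the set of admissible $(x_i,\theta_i)$ is open and nonempty; the endpoint links $(u,w_1)$ and $(w_{n-1},v)$, in which the directions of $u$ and $v$ are already prescribed, can be realized by placing $w_1$ with base point on $\alpha_u$ and direction transverse to it (and symmetrically for $w_{n-1}$).

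Because the chosen crossings have angles strictly inside $[\alpha+\eta,\pi-\alpha-\eta]$ and occur at interior points of the segments, they persist under sufficiently small perturbations of all tangent vectors involved. Hence there exist open neighborhoods $U\ni u$, $V\ni v$, and $W_i\ni w_i$ in $SM$ such that every $(u',v')\in U\times V$ and every $(w_1',\ldots,w_{n-1}')\in W_1\times\cdots\times W_{n-1}$ yield a valid chain with $\bar{H}_\delta(w_{i-1}',w_i')\geq c_0/(\delta^2\kappa_\varphi)$ at each link. Integrating this pointwise lower bound over the product of the $W_i$ gives
\[
    \bar{H}_\delta^n(u',v')\;\geq\;\bigl(c_0/(\delta^2\kappa_\varphi)\bigr)^n\prod_{i=1}^{n-1}\nu_L(W_i)\;>\;0
\]
uniformly on $U\times V$. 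A finite subcover of the compact space $SM\times SM$ by such neighborhoods, followed by taking the minimum of the resulting positive lower bounds, yields the desired uniform constant $\varepsilon$.

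The hard part will be the non-continuity of $H_\delta$: arbitrarily small perturbations of the initial tangent vectors can in principle create or destroy transversal intersections, so the straightforward continuity arguments on kernels fail at the boundary of the crossing set. I would sidestep this by always working strictly in the interior of the crossing region — both in the angular range (using the cushion $\eta$) and by arranging each intersection to occur well away from the endpoints of the two segments — where the event "the two segments cross transversally at an angle in a fixed compact range" is stable under $C^0$-small perturbations of the tangent vectors.
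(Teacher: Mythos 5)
Your proposal is correct and follows essentially the same route as the paper's proof: a chain of length-$\delta$ segments joining $u$ to $v$ through intermediate points spaced at distance $<\delta/4$, with transversality ensuring stability under perturbation, hence a positive lower bound on the iterated kernel. You usefully make explicit two points the paper leaves implicit — that the crossing angles must be kept in a compact subinterval of $(\alpha,\pi-\alpha)$ where $\varphi$ is bounded below, and that the ``jiggling'' is converted into \eqref{eq:doeblin} by integrating over product neighborhoods of the intermediate vectors and then compactness of $SM\times SM$.
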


\begin{proof}
Chose $n$ so large that for any two points $x,y\in S$ there is a
sequence $\{x_{i} \}_{0\leq i\leq n}$ of $n+1$ points beginning with
$x_{0}=x$ and ending at $x_{n}=y$, and such that each successive pair
$x_{i},x_{i+1}$ are at distance $<\delta /4$. Then for any two
geodesic segments $\alpha ,\beta$ of length $\delta$ on $S$ there is a
chain of $n+1$ geodesic segments $\alpha_{i}$, all of length $\delta$,
beginning at $\alpha_{0}=\alpha$ and ending at $\alpha_{n}=\beta$,
such that any two successive segments $\alpha_{i}$ and $\alpha_{i+1}$
intersect transversally.  Since the intersections are transversal, the
initial points and directions of these segments can be jiggled
slightly without undoing any of the transversal intersections. This
implies \eqref{eq:doeblin}.
\end{proof}

\begin{corollary}\label{corollary:doeblin}
The eigenvalue $\delta^{2}\kappa_{\varphi}$ is a simple eigenvalue of the
integral operator $H_{\delta}$, and the rest of the spectrum lies in
a disk of radius $<\delta^{2}\kappa_{\varphi}$.
\end{corollary}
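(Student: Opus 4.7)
The plan is to package the Doeblin condition of Lemma~\ref{lemma:mixing} into a quantitative spectral gap estimate for the iterated operator and then read off both simplicity and the disk-radius bound. First I would note that Lemma~\ref{lemma:compactness} together with the symmetry $H_\delta(u,v)=H_\delta(v,u)$ already shows that $H_\delta$ is compact and self-adjoint on $L^2(\nu_L)$, so its nonzero spectrum consists of isolated real eigenvalues of finite multiplicity; Lemma~\ref{lemma:H1} in turn identifies $\delta^2\kappa_\varphi$ as an eigenvalue with the constant function as eigenvector. It therefore suffices to work with the normalized Markov operator $\bar H_\delta:=H_\delta/(\delta^2\kappa_\varphi)$ and show that $1$ is a simple eigenvalue of $\bar H_\delta$ and that every other spectral point lies in some disk of radius strictly less than $1$.

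The central step is a contraction estimate on the invariant mean-zero subspace
\[
\mathcal H_0:=\Bigl\{\psi\in L^2(\nu_L):\int\psi\,d\nu_L=0\Bigr\}.
\]
Writing the iterated kernel as $\bar H_\delta^{\,n}(u,v)=\varepsilon+K(u,v)$ with the $n$ and $\varepsilon$ of Lemma~\ref{lemma:mixing}, the symmetric nonnegative remainder $K$ has $\int K(u,v)\,d\nu_L(v)=1-\varepsilon$ (and, by symmetry, the same identity in $u$). For $\psi\in\mathcal H_0$ the constant term annihilates $\psi$, leaving $\bar H_\delta^{\,n}\psi(u)=\int K(u,v)\psi(v)\,d\nu_L(v)$. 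A Cauchy--Schwarz estimate against the probability measure $K(u,v)/(1-\varepsilon)\,d\nu_L(v)$, combined with Fubini and the second marginal condition, gives the contraction $\|\bar H_\delta^{\,n}\psi\|_{L^2}\leq(1-\varepsilon)\|\psi\|_{L^2}$ for every $\psi\in\mathcal H_0$.

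From this contraction both halves of the corollary follow. Any $1$-eigenfunction of $\bar H_\delta$ splits as a constant plus an element of $\mathcal H_0$, and the mean-zero piece is itself a $1$-eigenfunction of $\bar H_\delta^{\,n}$, hence vanishes by the contraction; this gives simplicity of the eigenvalue $1$. For the spectral gap, the contraction says the operator norm of $\bar H_\delta^{\,n}|_{\mathcal H_0}$ is at most $1-\varepsilon$, so its spectrum on $\mathcal H_0$ lies in the closed disk of radius $1-\varepsilon$, and the spectral mapping theorem then confines the spectrum of $\bar H_\delta|_{\mathcal H_0}$ to the disk of radius $(1-\varepsilon)^{1/n}<1$. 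Rescaling by $\delta^2\kappa_\varphi$ produces the desired statement for $H_\delta$.

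I do not expect any conceptual obstacle here; the one technicality that deserves care is verifying the second marginal identity $\int K(u,v)\,d\nu_L(u)=1-\varepsilon$ needed to close the Cauchy--Schwarz-plus-Fubini step, but this is immediate from the symmetry of $H_\delta$ transported through $n$-fold composition together with Lemma~\ref{lemma:H1}.
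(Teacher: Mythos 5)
Your argument is correct and is exactly the standard Doeblin-condition argument that the paper invokes with the one-line proof ``this is a standard result in the theory of Markov operators'': the decomposition $\bar H_\delta^{\,n}=\varepsilon+K$, the Cauchy--Schwarz contraction by $1-\varepsilon$ on the mean-zero subspace, and the spectral mapping step are all sound (self-adjointness plus $\bar H_\delta 1=1$ gives the invariance of $\mathcal H_0$ and the second marginal identity you flag). You have simply supplied the details the paper leaves to the reader.
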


\begin{proof}
This is a standard result in the theory of Markov operators.
\end{proof}

\begin{corollary}\label{corollary:meanZeroEigenfunctions}
For every $j\geq 2$ the eigenfunction $\psi_{j}$ has mean zero
relative to $\nu_{L}$, and distinct eigenfunctions are uncorrelated.
\end{corollary}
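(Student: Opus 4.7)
The plan is to deduce both assertions from the self-adjointness of $H_\delta$ together with Lemma~\ref{lemma:H1} and Corollary~\ref{corollary:doeblin}.

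First I would establish the mean-zero property. By Lemma~\ref{lemma:H1}, the constant function $\mathbf{1}$ is an eigenfunction of $H_\delta$ with eigenvalue $\delta^2 \kappa_\varphi$, and by Corollary~\ref{corollary:doeblin} this eigenvalue is simple and strictly separated from the rest of the spectrum. Since $H_\delta$ is self-adjoint on $L^2(\nu_L)$, eigenfunctions belonging to distinct eigenvalues are orthogonal; consequently, for every $j\geq 2$ the eigenfunction $\psi_j$, whose eigenvalue $\lambda_j$ is different from $\delta^2\kappa_\varphi$, must satisfy
\[
	\int_{SM} \psi_j \, d\nu_L = \langle \psi_j, \mathbf{1}\rangle_{L^2(\nu_L)} = 0.
\]

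Next I would handle uncorrelatedness. The $\psi_j$ have been chosen to form an orthonormal basis of $L^2(\nu_L)$ (this is valid because $H_\delta$ is compact and self-adjoint; within any eigenspace of positive dimension we simply orthonormalize). In particular, for $i \neq j$ with $i,j \geq 2$,
\[
	\langle \psi_i, \psi_j \rangle_{L^2(\nu_L)} = \int_{SM} \psi_i \psi_j \, d\nu_L = 0.
\]
Combined with the mean-zero property just established, this gives
\[
	\mathrm{Cov}_{\nu_L}(\psi_i,\psi_j) = \int \psi_i \psi_j \, d\nu_L - \Bigl(\int \psi_i \, d\nu_L\Bigr)\Bigl(\int \psi_j \, d\nu_L\Bigr) = 0,
\]
so distinct $\psi_j$ (for $j\geq 2$) are uncorrelated with respect to $\nu_L$. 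The case where one of the indices is $1$ (i.e.\ $\psi_1 = \mathbf{1}$) is covered by the mean-zero statement.

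There is no real obstacle here: both facts are formal consequences of the Hilbert--Schmidt structure that was already invoked right before the statement, and of the simplicity of the top eigenvalue. The only small subtlety is to make sure that when an eigenvalue $\lambda_j$ (with $j\geq 2$) has multiplicity greater than one, the orthonormalization is carried out within its eigenspace so that the $\psi_j$ remain genuine eigenfunctions; this is legitimate because every orthonormal basis of an eigenspace is again an orthonormal set of eigenfunctions.
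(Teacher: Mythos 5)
Your argument is correct and is essentially the paper's own: orthogonality of the eigenfunctions via the spectral theorem for the compact self-adjoint operator $H_{\delta}$, combined with the fact that $\psi_{1}=1$ is itself an eigenfunction, so $\psi_{j}\perp 1$ gives mean zero for $j\geq 2$ and orthonormality gives uncorrelatedness. Your extra remark about orthonormalizing within eigenspaces of multiplicity greater than one is a harmless elaboration of what the paper leaves implicit.
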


\begin{proof}
The spectral theorem guarantees   orthogonality of the
eigenfunctions. The key point is that $\psi_{1}=1$ is an
eigenfunction, and so the orthogonality $\psi_{j}\perp \psi_{1}$
implies that each $\psi_{j}$ for $j\geq 2$ has mean zero.
\end{proof}

\begin{lemma}\label{lemma:nontrivialEigenvalues}
If $\delta >0$ is sufficiently small then $H_{\delta}$ has
eigenvalues other than $0$ and $\lambda_{1} (\delta)$.
\end{lemma}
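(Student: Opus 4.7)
The plan is to compare the Hilbert--Schmidt norm of $H_{\delta}$ with the contribution of the leading eigenvalue $\lambda_{1}(\delta)=\delta^{2}\kappa_{\varphi}$, and show that for small $\delta$ these two quantities have different orders of magnitude in $\delta$. Since the eigenvalues $\lambda_{j}$ satisfy
\[
\sum_{j}\lambda_{j}^{2}\;=\;\|H_{\delta}\|_{HS}^{2}\;=\;\int_{SM}\!\int_{SM}H_{\delta}(u,v)^{2}\,d\nu_{L}(u)\,d\nu_{L}(v),
\]
it suffices to show that this double integral is of order $\delta^{2}$ while $\lambda_{1}(\delta)^{2}=\delta^{4}\kappa_{\varphi}^{2}$ is of order $\delta^{4}$; for small $\delta$ this will force $\sum_{j\ge 2}\lambda_{j}^{2}>0$.

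First I would establish the asymptotic
\[
\int_{SM}H_{\delta}(u,v)^{2}\,d\nu_{L}(v)\;\sim\;\delta^{2}\,\kappa_{\varphi^{2}}\qquad\text{uniformly in }u\in SM,
\]
where $\kappa_{\varphi^{2}}=(2\pi|M|)^{-1}\int_{0}^{2\pi}\varphi(\theta)^{2}|\sin\theta|\,d\theta>0$ (positivity uses that $\varphi$ is not identically zero on $(\alpha,\pi-\alpha)$). The proof mirrors the rhombus estimate \eqref{eq:sim} used for Lemma~\ref{lemma:H1}: since two geodesic segments of length $\delta$ intersect at most once for $\delta$ small, $H_{\delta}(u,v)^{2}=\varphi(\theta)^{2}$ when the segments meet at angle $\theta$ and is $0$ otherwise, so the same rhombus-of-side-$\delta$ and area $\delta^{2}|\sin\theta|$ calculation applies with $\varphi$ replaced by $\varphi^{2}$. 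One can even upgrade the asymptotic $\sim$ to an equality by the same sub-partition trick as in Lemma~\ref{lemma:H1}, but this is not needed: only a lower bound of the form $\|H_{\delta}\|_{HS}^{2}\ge c\,\delta^{2}$ for some $c>0$ and all sufficiently small $\delta$ is required.

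Integrating in $u$ gives $\|H_{\delta}\|_{HS}^{2}\sim\delta^{2}\kappa_{\varphi^{2}}$. On the other hand $\lambda_{1}(\delta)^{2}=\delta^{4}\kappa_{\varphi}^{2}=O(\delta^{4})$, so
\[
\sum_{j\ge 2}\lambda_{j}(\delta)^{2}\;=\;\|H_{\delta}\|_{HS}^{2}-\lambda_{1}(\delta)^{2}\;\geq\;\tfrac{1}{2}\kappa_{\varphi^{2}}\delta^{2}\;>\;0
\]
for all sufficiently small $\delta>0$. In particular at least one $\lambda_{j}$ with $j\ge 2$ is nonzero, which is the desired conclusion.

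The only non-routine step is the rhombus estimate for $H_{\delta}^{2}$, and this really is the same calculation as in Lemma~\ref{lemma:H1}: the essential geometric fact is that the set of base points from which a length-$\delta$ geodesic segment crosses a given length-$\delta$ segment $\alpha$ at angle approximately $\theta$ is a parallelogram of area $\approx\delta^{2}|\sin\theta|$. The only mild care needed is to check uniformity in $u$, which follows from the compactness of $SM$ and the smoothness of the exponential map on scales below the injectivity radius.
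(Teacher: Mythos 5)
Your argument is correct, but it is a genuinely different proof from the one in the paper. The paper's proof is soft and qualitative: if $0$ and $\lambda_{1}(\delta)$ were the only eigenvalues, the spectral decomposition would force $\bar{H}_{\delta}\psi$ to be constant for every $\psi\in L^{2}(\nu_{L})$, which is visibly false for small $\delta$ because the kernel $H_{\delta}(u,\cdot)$ is supported in a small neighbourhood of $u$ (take $\psi$ an indicator of a small set to see that $\bar H_{\delta}\psi$ is nonconstant). Your route instead exploits the Hilbert--Schmidt identity $\sum_{j}\lambda_{j}^{2}=\Vert H_{\delta}\Vert_{HS}^{2}$ together with a second application of the rhombus computation from Lemma~\ref{lemma:H1} (with $\varphi$ replaced by $\varphi^{2}$) to get $\Vert H_{\delta}\Vert_{HS}^{2}\sim\delta^{2}\kappa_{\varphi^{2}}$, versus $\lambda_{1}(\delta)^{2}=\delta^{4}\kappa_{\varphi}^{2}$; the mismatch in orders of $\delta$ forces a nonzero eigenvalue beyond $\lambda_{1}$. (Strictly, to write $\sum_{j\ge2}\lambda_{j}^{2}=\Vert H_{\delta}\Vert_{HS}^{2}-\lambda_{1}^{2}$ and to conclude that the surviving $\lambda_{j}$ differs from $\lambda_{1}(\delta)$, you should invoke the simplicity of $\lambda_{1}(\delta)$ from Corollary~\ref{corollary:doeblin}; say so explicitly.) What your approach buys is quantitative information: it shows $\sum_{j\ge2}\lambda_{j}(\delta)^{2}\gtrsim\delta^{2}$, i.e.\ the non-leading spectrum carries almost all of the Hilbert--Schmidt mass for small $\delta$, and as a by-product the finiteness of $\Vert H_{\delta}\Vert_{HS}$ reproves compactness (Lemma~\ref{lemma:compactness}). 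The paper's argument is shorter and avoids any asymptotic computation, but yields only the bare existence statement.
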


\begin{proof}
Otherwise, the Markov operator  $\bar{H}_{\delta}$ would be a
projection operator: for every $\psi \in L^{2} (\nu_{L})$ the
function $\bar{H}_{\delta}\psi$ would be constant. But if $\delta
>0$ is small, this is obviously not the  case.
\end{proof}

\subsection{Smoothing}\label{ssec:smoothing} The discontinuity of the
kernel $H_{\delta}$ creates certain technical problems: for instance,
the eigenfunctions $\psi_{j}$ need not be continuous. Thus, it will be
to our advantage to approximate $H_{\delta}$ by a smooth kernel
$K_{\delta}$ in such a way that the sums \eqref{eq:u-statistic} are
not too badly disturbed when $H_{\delta}$ is replaced by the
approximation $K_{\delta}$. It is \emph{solely} for the purpose of
constructing this approximation that the restrictions in
Theorem~\ref{theorem:random} on the smoothing kernel $\varphi$ --- in
particular, that it vanishes in a neighborhood of $0$ --- are needed.

Let $p( s)$ be an even, $C^{\infty}$ probability density on $\zz{R}$
with support contained in the interval $[-1,1]$ Define $\mathcal{D}$
to be the set of pairs $(u,v)\in SM\times SM$ such that the geodesics
through $u$ and $v$ cross, if at all, \emph{transversally}; this set
is dense in $SM\times SM$.  For any pair $(u,v)\in \mathcal{D}$, the
(two-sided) geodesics $\{\gamma (s;u) \}_{s\geq 0}$ and $\{\gamma
(t;v) \}_{t\geq 0}$ will intersect in at most countably many points,
which can be labeled $(s_{i},t_{i})$ where the entries $s_{i}$ and
$t_{i}$ denote the signed distances along the two geodesics from their
origins $u,v$ where the intersection occurs. (The
intersections do not generally occur in the same (time) order along
the two geodesics.) Let $\theta_{i}\in
(0,\pi)$ be the angle of crossing at the $i$th intersection, and
define
\begin{equation}\label{eq:kernelK}
	K_{\delta} (u,v) =\sum_{i}  \delta^{-2}p (s_{i}/\delta ) p
	(t_{i}/\delta ) \varphi (\theta_{i}).
\end{equation}

\begin{lemma}\label{lemma:well-def}
Assume that the smoothing function $\varphi$ is zero in a neighborhood
$[-\alpha ,\alpha]$ of $0$.
If $\delta >0$ is sufficiently small then for any pair $(u,v)\in
\mathcal{D}$, the sum \eqref{eq:kernelK} contains at most
one nonzero term,  so $K_{\delta} (u,v)$ is well-defined and
finite. Furthermore, the function $K_{\delta} (u,v)$ extends to a
$C^{\infty}$, symmetric function on $SM\times SM$, by setting 
$K_{\delta} (u,v)=0$ for all $(u,v)\not \in \mathcal{D}$.
\end{lemma}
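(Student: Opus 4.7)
The plan is to first reduce the sum in \eqref{eq:kernelK} to at most one surviving term, then establish smoothness on all of $SM \times SM$. Choose $\delta$ small enough that $2\delta < \varrho$, where $\varrho$ is the radius from Section~\ref{ssec:i-kernel} below which any two geodesic segments cross transversally in at most one point. Since $p$ is supported in $[-1,1]$, only intersections with $|s_i|, |t_i| \le \delta$ can contribute; all such intersections lie on segments of length $\le 2\delta < \varrho$, so at most one transversal crossing exists. Evenness and $\pi$-periodicity of $\varphi$ together with $\varphi|_{[-\alpha,\alpha]} \equiv 0$ force $\varphi$ to vanish on neighborhoods of both $0$ and $\pi$, so any tangential crossings contribute nothing. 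The sum therefore has at most one nonzero term; $K_\delta(u,v)$ is well-defined and finite, and symmetry is immediate from interchanging $(s_i, u) \leftrightarrow (t_i, v)$ in the defining expression.

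For smoothness, call $(u,v) \in \mathcal{D}$ \emph{active} if the geodesics through $u$ and $v$ cross at some $(s_0,t_0) \in (-\delta,\delta)^2$ with $\varphi(\theta_0) > 0$. The active set is open, and on it the implicit function theorem applied to the intersection equation (say, in a geodesic normal chart centered at the crossing point) yields $C^\infty$ dependence of $(s_0, t_0, \theta_0)$ on $(u,v)$: the required non-degeneracy of the Jacobian is exactly the transversality bound $|\sin \theta_0| > 0$, which holds automatically because $\varphi(\theta_0) > 0$ forces $\theta_0 \in (\alpha, \pi - \alpha)$. Composing with the smooth $p$ and $\varphi$ shows $K_\delta$ is $C^\infty$ on the active set.

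It remains to show that $K_\delta$ extends smoothly by $0$ across the complement (which includes $(SM \times SM) \setminus \mathcal{D}$). For this I verify that every non-active $(u_*, v_*)$ admits a neighborhood $\mathcal{U}$ on which $K_\delta \equiv 0$. If the geodesics through $(u_*, v_*)$ have no intersection in $[-\delta, \delta]^2$, continuity of the geodesic flow and compact support of $p$ supply such a $\mathcal{U}$ directly. Otherwise the only intersections in this square are tangential or have $\theta_*$ already in the vanishing set of $\varphi$; after shrinking $\mathcal{U}$, the angle of any persisting or bifurcating transversal crossing stays close to $\{0, \pi\}$, where $\varphi \equiv 0$, so every term of \eqref{eq:kernelK} still vanishes on $\mathcal{U}$.

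The genuine difficulty is this last subcase: at tangential configurations the intersection parameters $(s_i, t_i)$ need not be smooth functions of $(u,v)$, and crossings can bifurcate or annihilate under perturbation. What rescues the argument is that $\varphi$ vanishes to infinite order at $0$ and $\pi$, so irregularities in the crossing locus are invisible to $K_\delta$; the non-trivial content reduces to a continuity statement — that bifurcating transversal crossings cannot escape a prescribed small angular neighborhood of $\{0, \pi\}$ — which follows from continuity of the angle between tangent vectors as a function of the base point data on the compact space $SM \times SM$.
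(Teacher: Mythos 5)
Your argument is correct and follows essentially the same route as the paper's proof: the support of $p$ confines contributing intersections to segments of length $2\delta<\varrho$ (hence at most one transversal crossing), smoothness on $\mathcal{D}$ comes from smooth dependence of the crossing data on $(u,v)$ via transversality, and the extension by zero works because near $\partial\mathcal{D}$ all crossing angles degenerate into the set where $\varphi$ vanishes. You are in fact more careful than the paper on the last step, where the published proof simply asserts that the angles approach zero near $\partial\mathcal{D}$. One small slip in your trichotomy: a non-active pair may still have a transversal crossing with $\varphi(\theta)>0$ sitting exactly on the boundary of the parameter square $[-\delta,\delta]^{2}$, in which case $K_{\delta}$ need not vanish on a whole neighborhood; smoothness there instead follows from the implicit function theorem together with the infinite-order vanishing of $p$ at $\pm 1$.
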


\begin{proof}
Since $\varphi$ is even, the kernel $K_{\delta}$ is symmetric if the
sum \eqref{eq:kernelK} is finite. Now in order that the $i$th term of
the sum \eqref{eq:kernelK} be nonzero, the distances $|s_{i}|$ and
$|t_{i}|$ must both be smaller than $\delta $, because $p$ has support
contained in $[-1,1]$. But if $\delta >0$ is sufficiently small then
any two geodesic segments of length $2\delta$ will intersect
transversally at most once.  Thus, the sum \eqref{eq:kernelK} has at
most one nonzero term.

Since geodesics vary smoothly with their initial conditions, the
intersection distances $s_{i},t_{i}$ and angles $\theta_{i}$ vary
smoothly with $u,v$  in $\mathcal{D}$.  Consequently, the
function $K_{\delta}$ is $C^{\infty}$ in $\mathcal{D}$. But
as $(u,v)$ approaches the boundary of $\mathcal{D}$, the angle(s) of
intersection of the geodesics through $u$ and $v$ must approach
zero. Hence, by the assumption on $\varphi$, the kernel $K_{\delta}$
vanishes near $\partial \mathcal{D}$.
\end{proof}

\begin{lemma}\label{lemma:siFormula}
For any geodesic $\gamma$, the smoothed self-intersection number
$N_{\varphi} (t)$ of the segment $\gamma [0,t]$ satisfies the
inequalities
\begin{equation}\label{eq:bounds}
	\frac{1}{2} \iint_{[\delta ,t-\delta ]^{2}} K_{\delta}
	(\gamma (r),\gamma (s)) \, dr\,ds 
	\leq N_{\varphi} (t) \leq 
	\frac{1}{2} \iint_{[-\delta ,t+\delta ]^{2}} K_{\delta}
	(\gamma (r),\gamma (s)) \, dr\,ds .
\end{equation}
\end{lemma}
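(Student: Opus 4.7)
The strategy is to unpack $K_{\delta}(\gamma(r),\gamma(s))$ as an explicit sum over the self-intersections of the bi-infinite geodesic $\gamma$, then to integrate term by term and compare with $N_{\varphi}(t)$. Let $S$ be the set of ordered pairs $(a,b)$ with $a\neq b$, $\pi(\gamma(a))=\pi(\gamma(b))$, and $\gamma(a)\neq\pm\gamma(b)$ in $SM$; equivalently, $S$ enumerates the transversal self-intersections of $\gamma$, each appearing twice (as $(a,b)$ and $(b,a)$). For $u=\gamma(r)$ and $v=\gamma(s)$, the two-sided geodesics $\{\gamma(\cdot;u)\}$ and $\{\gamma(\cdot;v)\}$ coincide with $\gamma$ up to reparametrization, and the transversal crossings of the two parametrized curves at parameters $(s',t')$ are in bijection with pairs $(a,b)\in S$ via $a=r+s'$, $b=s+t'$. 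Thus, by Definition~\ref{eq:kernelK},
\[
 K_{\delta}(\gamma(r),\gamma(s)) \;=\; \sum_{(a,b)\in S} \delta^{-2} \,p\!\left(\tfrac{a-r}{\delta}\right) p\!\left(\tfrac{b-s}{\delta}\right) \varphi(\theta_{(a,b)}),
\]
where $\theta_{(a,b)}=\theta_{(b,a)}$ is the crossing angle (well-defined since $\varphi$ is even).

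Now integrate this identity over a rectangle $R\subset\mathbb{R}^{2}$. Since $p\geq 0$ is a probability density supported in $[-1,1]$, each term yields
\[
 \iint_{R} \delta^{-2} p\!\left(\tfrac{a-r}{\delta}\right) p\!\left(\tfrac{b-s}{\delta}\right) dr\,ds \;\in\;[0,1],
\]
with value exactly $1$ when the support rectangle $[a-\delta,a+\delta]\times[b-\delta,b+\delta]$ is contained in $R$, and value $0$ when the supports are disjoint. For the upper bound, take $R=[-\delta,t+\delta]^{2}$: every self-intersection with $(a,b)\in[0,t]^{2}$ has its support rectangle contained in $R$, so the corresponding two ordered pairs $(a,b),(b,a)\in S$ together contribute $2\varphi(\theta_{i})$, while all other pairs in $S$ contribute nonnegatively. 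Summing and dividing by $2$ gives
\[
 \tfrac{1}{2}\iint_{[-\delta,t+\delta]^{2}} K_{\delta}(\gamma(r),\gamma(s))\,dr\,ds \;\geq\; \sum_{i=1}^{N(t)} \varphi(\theta_{i}) \;=\; N_{\varphi}(t).
\]
For the lower bound, take $R=[\delta,t-\delta]^{2}$: the support rectangle for a pair $(a,b)\in S$ meets $R$ only if $a,b\in[0,t]$, i.e., only self-intersections of $\gamma[0,t]$ can contribute, and each ordered pair contributes at most $1$, hence at most $2\varphi(\theta_{i})$ per unordered self-intersection. Dividing by $2$ reverses the inequality and yields the lower bound of \eqref{eq:bounds}.

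The main things to be careful about are interpretational: one must check that the ``intersections'' of the coincident two-sided geodesics through $\gamma(r)$ and $\gamma(s)$ referred to in Definition~\ref{eq:kernelK} are the transversal crossings of the parametrized curves, so that they correspond exactly to self-intersections of $\gamma$ and not to the trivial overlap of traces. Once this identification is in hand, the bookkeeping between ordered pairs in $S$ and unordered self-intersections (absorbed into the factor $\tfrac{1}{2}$) is routine. The restriction to $\delta$ smaller than the injectivity scale $\varrho$ from Section~\ref{ssec:i-kernel} ensures that no spurious contributions from near-coincidences on scale $\delta$ enter the sums in $K_{\delta}(\gamma(r),\gamma(s))$, as guaranteed by Lemma~\ref{lemma:well-def}.
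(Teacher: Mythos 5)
Your argument is correct and is essentially the paper's own proof, just with the bookkeeping made explicit: you expand $K_{\delta}(\gamma(r),\gamma(s))$ over the transversal self-intersection times $(a,b)$ of $\gamma$, observe that each term integrates to exactly $\varphi(\theta)$ once the $2\delta\times 2\delta$ support square around $(a,b)$ is contained in the domain of integration (and to something in $[0,\varphi(\theta)]$ otherwise), and note that enlarging or shrinking the limits by $\delta$ captures or excludes the boundary cases. Your explicit resolution of the ordered/unordered pair counting via the factor $1/2$, and of the interpretational point that only transversal crossings (not the trivial coincidence of the traces) enter the sum defining $K_{\delta}$ on the diagonal geodesic, are both consistent with the paper's intent.
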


\begin{proof}
Suppose that  $\gamma$ has a self-intersection at some $(r,s)$, that
is, the vectors $\gamma (r)$ and $\gamma (s)$ lie over the same base
point in $M$. Then by definition of $K_{\delta}$, since $h$ is a
probability density supported by $[-\delta /2,\delta /2]$, the
integral of $K_{\delta} (\gamma (r'),\gamma (s'))$ over the square of
side $\delta$ centered at $(r,s)$ must be $1$. Hence, both bounding
integrals in \eqref{eq:bounds} count each such self-intersection
$(r,s)$ with weight $1$. The only self-intersections not counted
correctly are those $(r,s)$ where either $r$ or $s$ lies within
$\delta$ of one of the time endpoints $0$ or $t$. Adjusting the limits of
integration by $\pm \delta$ compensates for these boundary errors.
\end{proof}

\begin{remark}\label{remark:errors}
The inequalities \eqref{eq:bounds} imply that the double integral on
the right side of \eqref{eq:bounds} is bounded above by $N_{\varphi} (\gamma
[-\delta ,t+\delta])$, and the  double integral on the left is bounded
below by $N_{\varphi} (\gamma [\delta ,t-\delta])$. Therefore, the errors in the
inequalities \eqref{eq:bounds} are no larger than
\begin{equation}\label{eq:discrepancy}
	N_{\varphi}(\gamma [-\delta ,t+\delta])-N_{\varphi}(\gamma
	 [\delta,t-\delta]).
\end{equation}
\qed
\end{remark}

The smoothed kernels $K_{\delta} (u,v)$ enjoy all of the properties
enumerated for the intersection kernels $H_{\delta} (u,v)$ in
sec.~\ref{ssec:integralOperator} above, provided the smoothing window
$\delta >0$ is sufficiently small. In particular,

\begin{itemize}
\item [(P1)] The constant function $1$ is an eigenfunction of
$K_{\delta}$, with eigenvalue $\kappa_{\varphi}$.
\item [(P2)] The integral operator $K_{\delta}$ is compact.
\item [(P3)] The Markov kernel $K_{\delta}/\kappa_{\varphi}$ satisfies
the Doeblin condition. 
\item [(P4)] The eigenvalue $\kappa_{\varphi}$ is simple, and the rest
of the spectrum lies in some
$[-\kappa_{\varphi}+\varepsilon,\kappa_{\varphi}-\varepsilon]$. 
\item [(P5)] Distinct eigenfunctions are uncorrelated, and except for
the constant eigenfunction have mean $0$.
\end{itemize} 

These may all be proved by mimicking the proofs of the analogous
assertions for the self-intersection kernels $H_{\delta}$. In the
special case that the smoothing density $p$ in the definition
\eqref{eq:kernelK} is not only even but also nondecreasing on
$(-\infty ,0]$, property (P1) can be deduced directly from Lemma
\ref{lemma:H1}, because in this case the kernel $K_{\delta}$ is a
convex combination of the kernels $H_{\varepsilon}/\varepsilon^{2}$,
where $0<\varepsilon \leq \delta$. Only Property (P1) will be needed
for the proof of Theorem~\ref{theorem:random}, and we will have no
need to consider smoothing densities $p$ that are not monotone on
$(-\infty ,0]$, so we refrain from spelling out the details of the
proofs of (P1)--- (P5) in the general case.

\section{SLLN for Self-Intersections}\label{sec:slln}

\subsection{SLLN for the smoothed self-intersection
numbers}\label{ssec:llnSmooth} According to
Lemmas~\ref{lemma:well-def}--\ref{lemma:siFormula}, if the smoothing
function $\varphi$ is zero in a neighborhood $[-\alpha,\alpha]$ of
$0$, then the intersection kernel $H_{\delta}$ can be approximated by
a continuous kernel $K_{\delta}$ in such a way that the smoothed
self-intersection number  $N_{\varphi} (t)$ is well-approximated by 
\[
	N^{*} (\gamma [0,t]):=\frac{1}{2} \iint_{[0,t]} K_{\delta} (\gamma (r),\gamma (s))
	\,dr ds.
\]
For the strong law of large numbers \eqref{eq:llnPhi}, only a crude
bound on the error in this approximation is needed.  By
Remark~\ref{remark:errors}, the error is bounded by
\eqref{eq:discrepancy}; and by another use of the double inequalities
\eqref{eq:bounds}, it follows that the discrepancy is bounded by the
difference $N^{*} (\gamma [-\delta ,t+\delta])-N^{*} (\gamma [\delta
,t-\delta])$.  Consequently, since $\delta >0$ can be chosen
arbitrarily small, to prove the strong law of large numbers
\eqref{eq:llnPhi} it suffices to show that for $\delta >0$
sufficiently small and for $\nu_{L}-$almost every initial point
$\gamma (0)$,
\begin{equation}\label{eq:LLNobj}
	\lim_{t \rightarrow \infty} t^{-2}\iint_{[0,t]^{2}} K_{\delta}
	(\gamma (s_{1}),\gamma (s_{2})) \,ds_{1} ds_{2} =\kappa_{\varphi }
	%\quad \nu_{L}-\text{almost surely.}
\end{equation}
The following proposition implies that for any probability
measure $\mu$ on $SM$ that is invariant and ergodic under the geodesic
flow, for $\mu -$almost every initial point $\gamma (0)$,
\begin{equation}\label{eq:llnGen}
	\lim_{t \rightarrow \infty} t^{-2}\iint_{[0,t]^{2}} K_{\delta}
	(\gamma (s_{1}),\gamma (s_{2})) \,ds_{1} ds_{2} =
	\iint_{SM\times SM} K_{\delta} (x,y) \, d\mu (x)d\mu (y).
	%\quad \mu -\text{almost surely.}
\end{equation}
That the expectation on the right equals $\kappa_{\varphi}$ for $\mu
=\nu_{L}$ follows from  property (P1) above.

\begin{proposition}\label{proposition:ergodic}
Let $(\mathcal{X},d)$ be a compact metric space and let
$K:\mathcal{X}^{2} \rightarrow \zz{R}$ be continuous. If $\mu$ is a
Borel probability measure on $\mathcal{X}$ and $T:\mathcal{X}
\rightarrow \mathcal{X}$ is an ergodic, measure-preserving
transformation (not necessarily continuous) relative to $\mu$, then
\begin{equation}\label{eq:ergodic}
	\lim_{n \rightarrow \infty}
	\frac{1}{n^{2}}\sum_{i=1}^{n}\sum_{j=1}^{n} K (T^{i}x,T^{j}x)
	= \iint_{\mathcal{X}\times \mathcal{X}} K (y,z) \,d\mu (y) d\mu (z)
\end{equation}
for $\mu -$almost every $x$.
\end{proposition}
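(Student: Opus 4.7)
The plan is to reduce the two-dimensional ergodic average for $K$ to one-dimensional Birkhoff averages via an approximation of $K$ by finite sums of product functions, and then pass to the limit.

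First I would invoke Stone-Weierstrass on the compact space $\mathcal{X}\times\mathcal{X}$: the algebra generated by functions of the form $(y,z)\mapsto f(y)g(z)$ with $f,g\in C(\mathcal{X})$ is a unital subalgebra of $C(\mathcal{X}^{2})$ that separates points, hence is dense in the uniform norm. So for every $\varepsilon>0$ I can choose
\[
	K_{\varepsilon}(y,z)=\sum_{k=1}^{m_{\varepsilon}} f_{k}(y)g_{k}(z)
\]
with $\|K-K_{\varepsilon}\|_{\infty}<\varepsilon$. To avoid having to handle an uncountable family of exceptional sets, I would fix once and for all a sequence $\varepsilon_{\ell}\downarrow 0$ and a corresponding sequence of approximants $K_{\varepsilon_{\ell}}$, and then work with the countable collection of functions $\{f_{k},g_{k}\}$ that appears.

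Second, for each $f\in C(\mathcal{X})$ the Birkhoff ergodic theorem (applied to the ergodic, measure-preserving $T$) gives $\frac{1}{n}\sum_{i=1}^{n}f(T^{i}x)\to\int f\,d\mu$ for $\mu$-a.e. $x$. Taking the intersection of the full-measure sets corresponding to the countable family $\{f_{k},g_{k}\}_{\ell,k}$ yields a single $\mu$-conull set $\mathcal{X}_{0}$ on which all these averages converge simultaneously. For $x\in\mathcal{X}_{0}$ and any fixed $\ell$,
\[
	\frac{1}{n^{2}}\sum_{i,j=1}^{n}K_{\varepsilon_{\ell}}(T^{i}x,T^{j}x)
	=\sum_{k=1}^{m_{\varepsilon_{\ell}}}
	\Bigl(\tfrac{1}{n}\sum_{i}f_{k}(T^{i}x)\Bigr)
	\Bigl(\tfrac{1}{n}\sum_{j}g_{k}(T^{j}x)\Bigr)
	\xrightarrow[n\to\infty]{}
	\sum_{k}\Bigl(\int f_{k}\,d\mu\Bigr)\Bigl(\int g_{k}\,d\mu\Bigr)
	=\iint K_{\varepsilon_{\ell}}\,d\mu\,d\mu.
\]

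Third, a simple error estimate links $K$ to $K_{\varepsilon_{\ell}}$: uniformly in $n$ and $x$,
\[
	\Bigl|\tfrac{1}{n^{2}}\sum_{i,j}K(T^{i}x,T^{j}x)
	-\tfrac{1}{n^{2}}\sum_{i,j}K_{\varepsilon_{\ell}}(T^{i}x,T^{j}x)\Bigr|
	\leq \|K-K_{\varepsilon_{\ell}}\|_{\infty}<\varepsilon_{\ell},
\]
and likewise $\bigl|\iint K\,d\mu\,d\mu-\iint K_{\varepsilon_{\ell}}\,d\mu\,d\mu\bigr|<\varepsilon_{\ell}$. Combining with the previous step, for every $x\in\mathcal{X}_{0}$ and every $\ell$,
\[
	\limsup_{n\to\infty}\Bigl|\tfrac{1}{n^{2}}\sum_{i,j}K(T^{i}x,T^{j}x)
	-\iint K\,d\mu\,d\mu\Bigr|\leq 2\varepsilon_{\ell}.
\]
Letting $\ell\to\infty$ gives \eqref{eq:ergodic} for every $x\in\mathcal{X}_{0}$, proving the proposition.

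There is no real obstacle here; the only point that requires a moment of care is the order of quantifiers (the exceptional null set must not depend on $\varepsilon$), which is handled by working with a countable dense family of product approximations rather than an uncountable one. Everything else is Stone-Weierstrass plus Birkhoff applied factor-by-factor, exploiting the fact that the double sum of a product function factors into a product of one-dimensional Birkhoff averages.
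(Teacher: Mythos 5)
Your proof is correct, but it takes a genuinely different route from the one in the paper. You approximate the kernel $K$ itself, uniformly on $\mathcal{X}\times\mathcal{X}$, by finite sums of product functions via Stone--Weierstrass, so that the double ergodic average factors exactly into products of one-dimensional Birkhoff averages; the only care needed is the countability of the family of approximants, which you handle correctly. The paper instead works with the family of slice functions $K_{x}(\cdot):=K(x,\cdot)$: continuity of $K$ on a compact square makes this family equicontinuous, Arzela--Ascoli produces a finite $\varepsilon$-net $\{K_{x_{i}}\}$, Birkhoff is applied to those finitely many functions, and one deduces that $\frac{1}{n}\sum_{j}K(x,T^{j}x)\to\int K(x,y)\,d\mu(y)$ almost surely \emph{uniformly} in $x$, after which averaging over $i$ finishes the argument. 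The trade-off: the paper's route yields the stronger intermediate statement of uniform-in-$x$ convergence of the slice averages (and needs only a finite net in the slice family, not a product decomposition), while your route is more streamlined and has the incidental virtue of foreshadowing the approximation by elementary kernels $\sum_{i,j}a_{i,j}\varphi_{i}(x)\varphi_{j}(y)$ that the paper uses later for the fluctuation analysis. Both arguments use only Birkhoff's theorem for countably many integrable functions and neither requires continuity of $T$, so both are fully compatible with the hypotheses of the proposition.
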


\begin{proof}
The function $K$ is bounded, since it is continuous,
so the double integral in \eqref{eq:ergodic} is well-defined and
finite. Furthermore, the set of functions $K_{x}$ defined by $K_{x}
(y) :=K (x,y)$, where $x$ ranges over the space $\mathcal{X}$, is
equicontinuous, and the function
\[
	\bar{K}_{x}:=\int_{\mathcal{X}}K_{x} (y) \,d\mu (y)
\]
is continuous in $x$.  The equicontinuity of the functions $K_{x}$
implies, by the Arzela-Ascoli theorem, that for any $\varepsilon >0$
there is a finite subset $F_{\varepsilon}=\{ x_{i}\}_{1\leq i\leq I}$
such that for any $x\in SM$ there is at least one index $i\leq I$ such
that
\begin{equation*}
	\xnorm{K_{x}-K_{x_{i}}}_{\infty}<\varepsilon .
\end{equation*}
It follows that the time average of $K_{x}$ along any trajectory
differs from the corresponding time average of $K_{x_{i}}$ by less
than $\varepsilon$. But since the set $F_{\varepsilon}$ is finite,
Birkhoff's ergodic theorem implies that if the initial point and
direction of $\gamma$ are chosen randomly according to $\nu_{L}$ then
with probability one, for each $x_{i}\in F_{\varepsilon}$,
\begin{equation*}
	\lim_{t \rightarrow \infty} \frac{1}{t} \int_{0}^{t}
	K (x_{i},\gamma (s)) \,ds = \int K (x_{i},
	y) \,d\nu_{L} (y).
\end{equation*}
It therefore follows from equicontinuity (let $\varepsilon \rightarrow
0$) and the continuity in $x$ of the averages $\bar{K}_{x}$ that
almost surely
\begin{equation*}
	\lim_{t \rightarrow \infty} \frac{1}{t} \int_{0}^{t}
	K (x,\gamma (s)) \,ds = \int K (x,
	y) \,d\nu_{L} (y) 
\end{equation*}
\emph{uniformly} for $x\in \mathcal{X}$. The uniformity of this convergence
guarantees that \eqref{eq:ergodic} holds
$\mu -$almost surely. 
\end{proof}

\begin{remark}\label{remark:subtlety}
Wiener's multi-parameter ergodic theorem (\cite{wiener}, Theorems I''
-- II'') implies that under much weaker hypotheses\footnote{Wiener
requires that $K\in L^{2} (\mu \times \mu)$. For still weaker
hypotheses, see \cite{krengel}.} on the function
$K$,
\[
	\lim_{n \rightarrow \infty}
	\frac{1}{n^{2}}\sum_{i=1}^{n}\sum_{j=1}^{n} K (T^{i}x,T^{j}y)
	= \iint_{\mathcal{X}\times \mathcal{X}} K (u,v) \,d\mu (u)
	d\mu (v)
	\quad \text{for} \; (\mu \times \mu )-\text{every} \;(x,y).
\]
Proposition~\ref{proposition:ergodic} cannot be deduced from this, as
the diagonal of $\mathcal{X}\times \mathcal{X}$ has $(\mu \times
\mu)-$measure $0$.  In fact, it is \emph{not} generally true that the convergence
\eqref{eq:ergodic} holds for functions $K (x,y)$ that are not
continuous, even if the measure-preserving transformation $T$ is
mixing.  A simple example can be constructed as follows. Let
$R=R_{\theta}$ be an irrational rotation of the circle $S^{1}$, and
let $\sigma :\Sigma \rightarrow \Sigma$ be the shift on the space of
all one-sided sequences $\omega =\omega_{1}\omega_{2}\dotsb$ with
entries $\pm 1$. For $x\in S^{1}$ and $\omega \in \Sigma$, define
\[
	T (x,\omega)= (R^{\omega_{1}}x,\sigma \omega );
\]
this is a mixing, measure-preserving transformation relative to $\lambda =$
Lebesgue$\times \mu$,  where $\mu $ is the product Bernoulli-$(1/2)$
measure on $\Sigma$. Now for $x,y\in S^{1}$ and $\omega ,\omega '\in
\Sigma$, define 
\begin{align*}
	K ((x,\omega),(y,\omega '))=K (x,y) &=1 \quad \text{if} \; y-x
	\in  \xclass{\theta}\\
	&=0 \quad \text{otherwise,}
\end{align*}
where $\xclass{\theta}$ denotes the (countable) subgroup of $S^{1}$
generated by $\theta$.  Clearly, $K=0$ almost surely relative to
the product measure $\lambda \times \lambda$, but $K
(T^{i}x,T^{j}x)=1$ along every orbit of $T$. Therefore,
\eqref{eq:ergodic} fails. It is not difficult to modify the
function $K$ so that the limit fails to exist with probability one.
\qed
\end{remark}

\subsection{SLLN for self-intersections}\label{ssec:sllnSI}

It is not much more difficult to prove the law of large numbers
\eqref{eq:lln}, by incorporating an additional observation from
\cite{lalley:si1}.  First, observe that
Proposition~\ref{proposition:ergodic} can be reformulated as a
statement about weak convergence of empirical distributions:

\begin{corollary}\label{corollary:empirical}
Let $(\mathcal{X},d)$ be a compact metric space, $\mu$  a
Borel probability measure on $\mathcal{X}$, and $T:\mathcal{X}
\rightarrow \mathcal{X}$  an ergodic, measure-preserving
transformation relative to $\mu$. For each $x\in \mathcal{X}$ and
$n\geq 1$, let 
\begin{equation}\label{eq:empirical}
	\mu^{x}_{n} :=\frac{1}{n^{2}}\sum_{i=1}^{n} \sum_{j=1}^{n}
	\delta_{(T^{i}x,T^{j}x)} 
\end{equation}
be the empirical measure that puts mass $1/n^{2}$ at each point
$(T^{i}x,T^{j}x)$. Then for $\mu -$almost every $x\in \mathcal{X}$,
\begin{equation}\label{eq:SLLNempirical}
	\mu^{x}_{n} \stackrel{w*}{\longrightarrow} \mu \times \mu . 
\end{equation}
Consequently, if $U\subset \mathcal{X}\times \mathcal{X}$ is any
Borel measurable set whose topological boundary satisfies $\mu \times
\mu (\partial U)=0$, then
 for $\mu -$almost every $x\in \mathcal{X}$,
\begin{equation}\label{eq:portmanteau}
	\lim_{n \rightarrow \infty}
	\frac{1}{n^{2}}\sum_{i=1}^{n}\sum_{j=1}^{n}
		\mathbf{1}_{U}(T^{i}x,T^{j}x) =\mu \times \mu (U).
\end{equation}
\end{corollary}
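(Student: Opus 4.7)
The plan is to reduce both claims to Proposition~\ref{proposition:ergodic} by a standard separability argument, and then to invoke the Portmanteau theorem for the second half.

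First I would observe that each $\mu_n^x$ is a Borel probability measure on the compact metric space $\mathcal{X}\times\mathcal{X}$, and that weak-$*$ convergence of probability measures is tested against the separable Banach space $C(\mathcal{X}\times\mathcal{X})$. Fix a countable dense subset $\{K_m\}_{m\geq 1}\subset C(\mathcal{X}\times\mathcal{X})$. Proposition~\ref{proposition:ergodic} applied to each $K_m$ yields a full-measure set $A_m\subset\mathcal{X}$ on which
\[
	\int K_m\,d\mu_n^x \;=\;\frac{1}{n^2}\sum_{i=1}^n\sum_{j=1}^n K_m(T^ix,T^jx) \;\longrightarrow\; \iint K_m\,d\mu\,d\mu.
\]
Set $A:=\bigcap_m A_m$, which still has $\mu(A)=1$. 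For $x\in A$ and an arbitrary $K\in C(\mathcal{X}\times\mathcal{X})$, approximate $K$ uniformly by some $K_m$ within $\varepsilon$; since the $\mu_n^x$ and $\mu\times\mu$ are probability measures, the integrals of $K$ and $K_m$ differ by at most $\varepsilon$ under either measure, and a standard three-$\varepsilon$ argument upgrades the convergence along $\{K_m\}$ to convergence along all of $C(\mathcal{X}\times\mathcal{X})$. This yields $\mu_n^x\stackrel{w*}{\longrightarrow}\mu\times\mu$ for every $x\in A$, proving \eqref{eq:SLLNempirical}.

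For the second assertion, fix $x\in A$ and any Borel set $U\subset\mathcal{X}\times\mathcal{X}$ with $\mu\times\mu(\partial U)=0$. Weak-$*$ convergence and the Portmanteau theorem together give $\mu_n^x(U)\to\mu\times\mu(U)$, which is exactly \eqref{eq:portmanteau}.

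Neither step presents real difficulty: the continuity hypothesis in Proposition~\ref{proposition:ergodic} is precisely what allows the separability argument, and Remark~\ref{remark:subtlety} already warns us that we cannot hope to extend \eqref{eq:portmanteau} to arbitrary Borel sets $U$ without the boundary-negligibility hypothesis $\mu\times\mu(\partial U)=0$. The only mild subtlety is to be careful that the exceptional null set is chosen \emph{before} one quantifies over test functions or over sets $U$, which is why the countable dense subset $\{K_m\}$ is introduced at the outset.
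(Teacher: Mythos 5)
Your proof is correct and follows essentially the same route as the paper: the first assertion is the weak-$*$ reformulation of Proposition~\ref{proposition:ergodic} (you make explicit the countable-dense-subset argument needed to obtain a single null set valid for all test functions, which the paper leaves implicit in the phrase ``by the definition of weak-$*$ convergence''), and the second assertion is the Portmanteau theorem applied to the continuity set $U$, exactly as in the paper.
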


\begin{proof}
The assertion \eqref{eq:SLLNempirical} is just an equivalent form of
\eqref{eq:ergodic}, by the definition of weak-$*$ convergence. Given
\eqref{eq:SLLNempirical}, the statement \eqref{eq:portmanteau} follows
by elementary analysis (see, for instance, Billingsley's ``Portmanteau
Theorem'', \cite{billingsley}, Theorem 2.1).
\end{proof}

\begin{proof}
[Proof of SLLN \eqref{eq:lln}] Fix $\delta >0$ small, and consider the
representation \eqref{eq:u-statistic} for $N (L)$ when
$L=n\delta$. For $\varphi \equiv 1$, the intersection kernel
$H_{\delta} (x,y)$ takes the form of an indicator function $H_{\delta}
(x,y)=\mathbf{1}_{U} (x,y)$, where $U=U_{\delta }$ is the set of pairs
$(x,y)$ such that the geodesic segments of length $\delta$ with
initial points $x,y$ intersect transversally. It is easily checked
(see \cite{lalley:si1}) that the boundary of this set has
$\nu_{L}\times \nu_{L}-$measure $0$, so
Corollary~\ref{corollary:empirical} implies that for almost every
initial point,
\[
	\lim_{n \rightarrow \infty}\frac{1}{n^{2}} N (\gamma
	[0,n\delta])= \nu_{L}\times \nu_{L} (U_{\delta}).
\]
By the same argument as in the proof of Lemma~\ref{lemma:H1}, the
constant on the right is $\sim \kappa_{M}\delta^{2}$ as $\delta
\rightarrow 0$. Thus, the SLLN \eqref{eq:lln} follows.
\end{proof}

\section{Weak Convergence of Fluctuations}\label{sec:weakConvergence}
\subsection{Heuristics}\label{ssec:heuristics}
We begin by using the results of sections
\ref{ssec:i-kernel}---\ref{ssec:integralOperator} to give a compelling
--- but non-rigorous --- explanation of the weak convergence asserted
in Theorem~\ref{theorem:random}.  The Hilbert-Schmidt theorem asserts
that a symmetric integral kernel in the class $L^{2} (\nu_{L}\times
\nu_{L})$ has an $L^{2}-$convergent eigenfunction expansion. The
intersection kernel $H_{\delta} (u,v)$ meets the requirements of this
theorem, and so its eigenfunction expansion converges in $L^{2}
(\nu_{L}\times \nu_{L})$:
\begin{equation}\label{eq:eigenfunctionExp}
	H_{\delta} (u,v)=\sum_{k=1}^{\infty} \lambda_{k} 
	\psi_{k} (u) \psi_{k} (v).
\end{equation}
The $L^{2}-$convergence of the series does not, of course, imply
\emph{pointwise} convergence. Nevertheless, let's proceed formally,
ignoring convergence issues: Recall
(Corollary~\ref{corollary:meanZeroEigenfunctions}) that the
eigenfunctions are mutually uncorrelated, and so all except the constant
eigenfunction $\psi_{1}$ have mean zero relative to $\nu_{L}$. Thus,
the representation \eqref{eq:u-statistic} of the intersection number
$N_{\varphi} (n\delta)$ can be rewritten as follows:
\begin{align}\label{eq:heuristics}
	N_{\varphi } (n\delta) - (n\delta )^{2}\kappa_{g}&=  
	\frac{1}{2} \sum_{i=1}^{n} \sum_{j=1}^{n} H_{\delta} (\gamma
	(i\delta ), \gamma (j\delta)) - (n\delta )^{2}\kappa_{g}\\
\notag 	&=	\frac{1}{2} \sum_{i=1}^{n} \sum_{j=1}^{n}
	  \sum_{k=2}^{\infty} \lambda_{k} (\delta) \psi_{k} (\gamma
	  (i\delta ))\psi_{k} (\gamma (j\delta )) \\
\notag 	  &=\frac{1}{2} \sum_{k=2}^{\infty} \lambda_{k} (\delta) \left(
	  \sum_{i=1}^{n} \psi_{k} (\gamma 
	  (i\delta ))\right)^{2}.
\end{align}
If the eigenfunctions $\psi_{j}$ were H\"{o}lder
continuous, the central limit theorem for the geodesic flow
\cite{ratner:clt} would imply that  for
any finite $K$ the joint distribution of the random vector 
\begin{equation}\label{eq:K-vector}
	\left( \frac{1}{\sqrt{n}}\sum_{i=1}^{n} \psi_{k} (\gamma
	(i\delta ))\right)_{2\leq k\leq K}
\end{equation}
converges, as $n \rightarrow \infty$, to a (possibly degenerate)
$K-$variate Gaussian distribution centered at the origin. (The central
limit theorem in \cite{ratner:clt} is stated only for the case $K=1$,
but the general case follows by standard weak convergence arguments
[the ``Cramer-Wold device''], as in \cite{billingsley}, ch.~1.) Hence,
for every $K<\infty$ the distribution of the truncated sum
\begin{equation}\label{eq:K-form}
	\frac{1}{n} \sum_{k=2}^{K} \lambda_{k} (\delta) \left(
	\sum_{i=1}^{n} \psi_{k} (\gamma 
	  (i\delta ))\right)^{2}
\end{equation}
should converge, as $n \rightarrow \infty$, to that of a quadratic
form in the entries of the limiting Gaussian
distribution. \footnote{That the limit distribution has the same form
as required by Definition~\ref{definition:rosenblatt} follows by the
spectral theorem for symmetric matrices and elementary properties of
the multivariate Gaussian distribution, as follows. Suppose that the
limit distribution of the random vector \eqref{eq:K-vector} is
mean-zero Gaussian with (possibly degenerate) covariance matrix
$\Sigma$; this distribution is the same as that of $\Sigma^{1/2}Z$,
where $Z$ is a Gaussian random vector with mean zero and identity
covariance matrix. Let $\Lambda$ be the diagonal matrix with diagonal
entries $\lambda_{j} (\delta)$. Then the limit distribution of
\eqref{eq:K-form} is identical to that of $Z^{T}MZ$, where
$M=\Sigma^{1/2}\Lambda \Sigma^{1/2}$. But the matrix $M$ is symmetric,
so it may be factored as $M=U^{T} D U$, where $U$ is an orthogonal
matrix and $D$ is diagonal.  Now if $Z$ is mean-zero Gaussian with the
identity covariance matrix, then so is $UZ$, since $U$ is
orthogonal. Thus, $Z^{T}MZ$ has the same form as in
Definition~\ref{definition:rosenblatt}, where $\theta_{j}$ are the
diagonal entries of $D$. }

There are, obviously, two problems with this argument. First, the
central limit theorem requires that the functions $\psi_{j}$ be
H\"{o}lder continuous; but since the intersection kernels $H_{\delta}$
are not continuous, their eigenfunctions $\psi_{j}$ will not be
continuous either. This problem could be circumvented by
smoothing. But second, the convergence of the infinite series in
\eqref{eq:heuristics} and the interchange of limits requires
justification. The fact that the series \eqref{eq:eigenfunctionExp}
converges in $L^{2} (\nu_{L}\times \nu_{L})$ is of no use here,
because for any $s,t>0$ the joint distribution of $(\gamma (s),\gamma
(t))$ is \emph{singular} relative to $\nu_{L}\times \nu_{L}$.  If the
kernel $H_{\delta} (u,v)$ were continuous and positive semi-definite,
then Mercer's theorem (\cite{courant-hilbert}, ch.~3) would imply
\emph{pointwise} --- in fact, uniform --- convergence of the series;
unfortunately,  neither $H_{\delta}$ nor its smoothed
version $K_{\delta}$ is positive semi-definite.

The way around these difficulties, it seems, is not
to use the eigenfunction expansion, but instead to
approximate, in $C^{m}-$norm for suitable $m$, the smoothed kernel
$K_{\delta} (x,y)$ by \emph{elementary kernels}, that is, finite sums of the form
\begin{equation}\label{eq:elem}
	h (x,y):=\sum_{i=1}^{J}\sum_{j=1}^{J} a_{i,j}\varphi_{i} (x)\varphi_{j} (y),
\end{equation}
where the functions $\varphi_{j}$ are $C^{m}$, with mean zero relative
to $\nu_{L}$, and the matrix $(a_{i,j})$ is symmetric. A repetition of
the argument given above shows that for any such kernel $h$,
\[
	T^{-1}\iint_{[0,T]^{2}}  h(\gamma (s),\gamma
	(t)) \,ds dt \stackrel{\mathcal{D}}{\longrightarrow}  G
\]
where $G$ is a (possibly degenerate) quadratic form in independent
unit Gaussian random variables. Thus, to prove
Theorem~\ref{theorem:random} it will suffice to show that the error
incurred in approximating $K_{\delta}$ by $h$ in the integral
\[
	T^{-1}\iint_{[0,T]^{2}}  K_{\delta}(\gamma (s),\gamma
	(t)) \,ds dt
\]
is small uniformly for $T>1$.  In the remainder of this section we shall show that
such estimates reduce to a problem of mixing for the geodesic flow.

\subsection{Multiple Mixing Rates for the Geodesic
Flow}\label{ssec:elementary}

The proof of Theorem~\ref{theorem:random} will rely on an extension of
\textsc{Dolgopyat}'s theorem \cite{dolgopyat} on exponential mixing
rates for the geodesic flow.  For the sake of simplicity, since our
only interest is in the case of the Liouville measure $\nu_{L}$, we
will state the extension only for this measure. Denote by $\zz{E}$
expectation relative to $\nu_{L}$, and by $\gamma_{t}=\gamma (t)$ the
geodesic ray whose initial direction vector $\gamma_{0}$ is randomly
chosen according to $\nu_{L}$.

\begin{theorem}\label{theorem:mixing-elementary}
For each $K=2,3,\dotsc$, there exist constants $C=C_{K}<\infty$,
$A=A_{K}>0$, and $m=m_{K}\in \zz{N}$ such that for any mean-zero,
real-valued functions $F_{1},F_{2},\dotsc ,F_{K}$ of class $C^{m}$ on
$SM$ and all $-\infty =t_{0}<t_{1}<t_{2}<\dotsb <t_{K}<t_{K+1}=\infty$,
\begin{equation}\label{eq:mixing-elementary}
	\bigg|\zz{E} \prod_{j=1}^{K} F_{j} (\gamma (t_{j}))\bigg|
	\leq C \left( \min_{1\leq j \leq K} \max (e^{-A (t_{j+1}-t_{j})}, e^{-A
	(t_{j}-t_{j-1})})\right)
	\prod_{j=1}^{K} \xnorm{F_{j}}_{m}.
\end{equation}
Here $\xnorm{F}_{m}$ denotes the $C^{m}$ norm of the function
$F$, that is,
\begin{equation}\label{eq:mNorm}
	\xnorm{F}_{m} := \min_{k\leq m} \sup_{x\in SM}
	|{D_{i_{1}}D_{i_{2}}\dotsb D_{i_{k}}F (x)}|
\end{equation}
where the maximum is over all mixed partial derivatives of order $\leq
m$ in the directions of unit tangent vectors to $SM$.
\end{theorem}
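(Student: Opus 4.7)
The plan is to induct on $K$. The base case $K=2$ is \textsc{Dolgopyat}'s theorem itself: for two mean-zero $C^{m}$ observables $F_{1},F_{2}$ on $SM$ and $t_{1}<t_{2}$,
$$
\bigl|\zz{E}F_{1}(\gamma(t_{1}))F_{2}(\gamma(t_{2}))\bigr|\leq Ce^{-A(t_{2}-t_{1})}\xnorm{F_{1}}_{m}\xnorm{F_{2}}_{m},
$$
which, using the convention $t_{0}=-\infty$ and $t_{K+1}=+\infty$, is exactly the $K=2$ case of \eqref{eq:mixing-elementary}. For $K\geq 3$ I would split the product at the ``best'' index and combine an application of the $K=2$ estimate on a \emph{centered} two-function correlation with the inductive hypothesis applied to the resulting lower-order pieces.

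Concretely, let $j^{*}\in\{1,\ldots,K\}$ realize the minimum on the right side of \eqref{eq:mixing-elementary}, and, by reflecting time if necessary, assume $t_{j^{*}+1}-t_{j^{*}}\geq t_{j^{*}}-t_{j^{*}-1}$ so that the controlling factor is $e^{-A(t_{j^{*}+1}-t_{j^{*}})}$. Introduce the ``past'' and ``future'' observables
$$
\Phi(v)=\prod_{i=1}^{j^{*}}F_{i}(\phi^{t_{i}-t_{j^{*}}}(v)),\qquad \Psi(v)=\prod_{i=j^{*}+1}^{K}F_{i}(\phi^{t_{i}-t_{j^{*}+1}}(v)),
$$
where $\phi^{s}$ is the geodesic flow; then $\prod_{i=1}^{K}F_{i}(\gamma(t_{i}))=\Phi(\gamma(t_{j^{*}}))\,\Psi(\gamma(t_{j^{*}+1}))$. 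Split $\Phi=\bar\Phi+(\Phi-\bar\Phi)$ and $\Psi=\bar\Psi+(\Psi-\bar\Psi)$ with means taken against $\nu_{L}$. Invariance of $\nu_{L}$ under $\phi^{s}$ makes the two cross terms vanish, leaving
$$
\zz{E}\bigl[\Phi(\gamma(t_{j^{*}}))\Psi(\gamma(t_{j^{*}+1}))\bigr]=\bar\Phi\,\bar\Psi+\zz{E}\bigl[(\Phi-\bar\Phi)(\gamma(t_{j^{*}}))\,(\Psi-\bar\Psi)(\gamma(t_{j^{*}+1}))\bigr].
$$
The second summand is a centered two-function correlation, to which the $K=2$ case applies and yields a factor $Ce^{-A(t_{j^{*}+1}-t_{j^{*}})}\xnorm{\Phi}_{m}\xnorm{\Psi}_{m}$. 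The means $\bar\Phi$ and $\bar\Psi$ are, by $\phi^{s}$-invariance of $\nu_{L}$, ordinary multi-correlations of orders $j^{*}$ and $K-j^{*}$, each strictly less than $K$, so the inductive hypothesis applies; a short combinatorial check using the definition of $j^{*}$ shows that their product is bounded by the right side of \eqref{eq:mixing-elementary}.

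The principal obstacle is controlling $\xnorm{\Phi}_{m}$ and $\xnorm{\Psi}_{m}$: composition with $\phi^{s}$ can inflate a $C^{m}$-norm by a factor of order $e^{\Lambda m|s|}$, where $\Lambda$ is the top Lyapunov exponent of the flow, and the shifts $|t_{i}-t_{j^{*}}|$ entering $\Phi$ can be much larger than the gap $t_{j^{*}+1}-t_{j^{*}}$; a naive bound therefore threatens to swamp the Dolgopyat gain. The remedy is to allow the triple $(C_{K},A_{K},m_{K})$ to degrade with $K$: shrinking $A_{K}$ at each inductive step buys enough headroom in the exponential to absorb the Lyapunov blow-up while still retaining an exponential factor in the gap. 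A cleaner alternative is to replace the isotropic $C^{m}$-norm by an anisotropic Lyapunov norm that measures derivatives along the stable and unstable foliations separately (pre-composition by $\phi^{-s}$ contracts derivatives in one direction while expanding them in the other) and import a correspondingly directional refinement of Dolgopyat's estimate. Either device closes the bookkeeping and produces constants $C_{K},A_{K},m_{K}$ explicitly from the base case.
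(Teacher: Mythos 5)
The paper does not actually prove this theorem: it asserts that \eqref{eq:mixing-elementary} ``can be proved in a similar fashion'' to Dolgopyat's $K=2$ estimate using the induction strategy of \cite{dolgopyat-2}, and omits all details. Your skeleton --- induct on $K$; split the product into a past block $\Phi$ and a future block $\Psi$ across a chosen gap; center each block so that the cross terms vanish by flow-invariance of $\nu_{L}$; bound the centered correlation by the $K=2$ case and the product of means $\bar\Phi\,\bar\Psi$ by the inductive hypothesis --- is the standard shape of such an argument and is consistent with what the paper gestures at. (One small slip: under your normalization $t_{j^{*}+1}-t_{j^{*}}\geq t_{j^{*}}-t_{j^{*}-1}$, the controlling factor in \eqref{eq:mixing-elementary} is $e^{-A(t_{j^{*}}-t_{j^{*}-1})}$, not $e^{-A(t_{j^{*}+1}-t_{j^{*}})}$, since $\max(e^{-a},e^{-b})=e^{-\min(a,b)}$. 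This is harmless: splitting across the larger gap gives an even smaller factor for the centered term, and the block $\{t_{1},\dots,t_{j^{*}}\}$ still presents the gap $t_{j^{*}}-t_{j^{*}-1}$ at its endpoint, so the induction delivers the needed bound on $\bar\Phi$.)

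The genuine gap is the norm control, which you correctly diagnose but do not resolve. Your first remedy --- shrinking $A_{K}$ --- fails quantitatively. The $C^{m}$-norm of $\Phi(v)=\prod_{i\leq j^{*}}F_{i}(\phi^{t_{i}-t_{j^{*}}}v)$ inflates like $e^{cm(t_{j^{*}}-t_{1})}$, i.e.\ with the total time-span of the block, and that span bears no relation to the gap being exploited: for equally spaced times $t_{i}=iT$ every gap equals $T$ (so the target bound is $Ce^{-AT}$), while any split leaves a block of span comparable to $KT$, so the Dolgopyat gain $e^{-A_{2}T}$ is multiplied by a norm inflation of order $e^{cKmT}$. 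No positive choice of $A_{K}$ absorbs this unless $A_{2}>cKm$, which there is no reason to expect. Your second remedy --- anisotropic Lyapunov-type norms exploiting that pre-composition with the flow contracts derivatives along one foliation while expanding them along the other --- is the right idea, but it is precisely the nontrivial content of the induction strategy in \cite{dolgopyat-2}, not a bookkeeping device one can ``import'': one must establish a $K=2$ estimate in which the past factor is measured in a norm not inflated by backward composition and the future factor in one not inflated by forward composition, and verify that products of shifted observables remain in the admissible class with controlled norms. As written, the proposal reduces the theorem to exactly the estimate that needs proving.
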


Dolgopyat \cite{dolgopyat} proves in the case $K=2$ (for functions of
differentiability class $C^{7}$) that for $t>0$,
\begin{equation}\label{eq:dolg}
	\bigg| \zz{E} F_{0} (\gamma (0)) F_{1} (\gamma (t)) \bigg|
	\leq  C e^{-At} \xnorm{F_{0}} \xnorm{F_{1}}.
\end{equation}
The inequality \eqref{eq:mixing-elementary} is a natural extension of
this, and can be proved in a similar fashion (using in addition the
induction strategy in \cite{dolgopyat-2}).   Because the arguments are
so similar, we omit the details. (In fact, the argument below -- see
Lemma~\ref{lemma:chebyshev} -- require only the case $K=4$.)

\subsection{Approximation by Elementary
Kernels}\label{ssec:approximation} To make use of
Theorem~\ref{theorem:mixing-elementary} we will approximate the
{centered} kernels $K^{*}_{\delta} (x,y):=K_{\delta}
(x,y)-\kappa_{\varphi}$ by \emph{elementary kernels}, that is, kernels
of the form \eqref{eq:elem}:
\[
	h (x,y):=\sum_{i=1}^{J}\sum_{j=1}^{J} a_{i,j}\varphi_{i} (x)\varphi_{j} (y),
\]
where the functions $\varphi_{j}$ are $C^{m}$, with mean zero relative
to $\nu_{L}$, and the matrix $(a_{i,j})$ is symmetric.  Existence of
such approximations (in particular, the requirement that the functions
$\varphi_{j}$ have mean zero) depends {crucially} on the fact that the
constant function $1$ is an eigenfunction of $K_{\delta}$ (Property
(P1) above): this guarantees that the kernel $K^{*}_{\delta} (x,y)$ is
\emph{centered}, that is,
\begin{equation}\label{eq:centered}
	\int K^{*}_{\delta} (x,y) \, \nu_{L} (dy)=
	\int K (x,y) \, \nu_{L} (dy) -\kappa_{\varphi} =0 
	\quad \text{for every} \;\; x\in SM.
\end{equation}

\begin{lemma}\label{lemma:approx}
Let $g (x,y)$ be a centered, symmetric kernel of class $C^{m}$, where
$m\geq 0$.  Then for every $\varepsilon >0$ there is an elementary
kernel $h (x,y)$ such that
\begin{equation}\label{eq:CmApprox}
	\xnorm{g-h}_{m}<\varepsilon .
\end{equation}
\end{lemma}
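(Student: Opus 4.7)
The plan is to first approximate $g$ by an \emph{arbitrary} tensor-product kernel in $C^m$ norm, symmetrize, and then use the centeredness hypothesis to upgrade to an elementary kernel whose component functions have mean zero.

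\textbf{Step 1 (Approximation by tensor products).} The space $C^m(SM\times SM)$ with norm $\xnorm{\cdot}_m$ is the closure of the algebraic tensor product $C^m(SM)\otimes C^m(SM)$. This is standard: for instance, cover $SM$ by finitely many coordinate charts with a smooth partition of unity $\{\chi_\alpha\}$ and write $g=\sum_{\alpha,\beta} \chi_\alpha(x)\chi_\beta(y) g(x,y)$; on each chart product one approximates $g$ uniformly in $C^m$ by polynomials (Weierstrass) in the local coordinates, which are themselves tensor products in the two variables. Thus there is a finite sum
\[
        \tilde h(x,y)=\sum_{i=1}^{J}\sum_{j=1}^{J} a_{i,j} f_i(x)f_j(y),\qquad f_i\in C^m(SM),
\]
with $\xnorm{g-\tilde h}_m<\varepsilon/3$.

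\textbf{Step 2 (Symmetrization).} Replace $\tilde h$ by its symmetrization $\tilde h_s(x,y):=\tfrac12(\tilde h(x,y)+\tilde h(y,x))$. Since $g$ is symmetric, $\xnorm{g-\tilde h_s}_m\leq \xnorm{g-\tilde h}_m<\varepsilon/3$; after relabelling indices, $\tilde h_s$ has the same tensor-product form with a symmetric coefficient matrix.

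\textbf{Step 3 (Centering the component functions).} Write $f_i=\bar f_i+c_i$, where $c_i:=\int f_i\,d\nu_L$ and $\bar f_i\in C^m(SM)$ has mean zero. Expanding,
\[
        \tilde h_s(x,y)=\underbrace{\sum_{i,j} a_{i,j}\bar f_i(x)\bar f_j(y)}_{=:h(x,y)}+P(x)+P(y)+R,
\]
where $P(x)=\sum_i \bigl(\sum_j a_{i,j} c_j\bigr)\bar f_i(x)$ and $R=\sum_{i,j} a_{i,j}c_ic_j$ (using symmetry of $(a_{i,j})$, the $x$- and $y$-only pieces coincide). The kernel $h$ is elementary, symmetric, and built from mean-zero $C^m$ functions, so it is our candidate.

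\textbf{Step 4 (Controlling the residual via the centering hypothesis).} Integrate $\tilde h_s$ in $y$: since $\bar f_j$ has mean zero,
\[
        \int \tilde h_s(x,y)\,d\nu_L(y)=P(x)+R.
\]
On the other hand, because $g$ is centered and $\xnorm{g-\tilde h_s}_m<\varepsilon/3$,
\[
        \xnorm{P(x)+R}_m=\Bigl\Vert \int (\tilde h_s-g)(x,y)\,d\nu_L(y)\Bigr\Vert_m<\varepsilon/3.
\]
(Integration against the probability measure $\nu_L$ is a contraction in $C^m$ norm because $\nu_L$ is invariant under the local coordinate expressions of differentiation.) Integrating $P(x)+R$ against $\nu_L$ and using $\int \bar f_i\,d\nu_L=0$ shows $|R|<\varepsilon/3$, and hence $\xnorm{P}_m<2\varepsilon/3$. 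Therefore
\[
        \xnorm{g-h}_m\leq \xnorm{g-\tilde h_s}_m+\xnorm{P(x)+P(y)+R}_m<\varepsilon/3+(2\cdot 2\varepsilon/3+\varepsilon/3)=O(\varepsilon),
\]
and replacing $\varepsilon$ by a smaller constant at the outset gives the claimed bound.

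The only slightly delicate point is Step 1 --- the density of $C^m(SM)\otimes C^m(SM)$ in $C^m(SM\times SM)$ in the $C^m$ norm --- but this is a routine consequence of local polynomial approximation combined with a smooth partition of unity; the rest of the argument is purely algebraic, driven by the centeredness condition $\int g(x,\cdot)\,d\nu_L=0$.
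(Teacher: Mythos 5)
Your argument is correct and follows essentially the same route as the paper's own proof: take the standard tensor-product ($C^m$-Weierstrass/partition-of-unity) approximation for granted, then use the centeredness of $g$ to show that subtracting the means $c_i$ from the component functions perturbs the kernel by only $O(\varepsilon)$, since integrating the approximant against $\nu_L(dy)$ (and then $d\nu_L(dx)$) bounds the residual terms $P$ and $R$. Your Steps 2--4 are just a slightly more explicit bookkeeping of the paper's one-line centering correction $\varphi_j^*=\varphi_j-\bar\varphi_j$, so there is nothing further to add.
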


\begin{proof}
This is completely standard except for the requirement that the
component functions $\varphi_{j}$ in the approximating kernel $h$ be
mean-zero. Suppose that $g$ is well-approximated by a kernel $h$ of
the form \eqref{eq:elem} in which the component functions
$\varphi_{j}$ are \emph{not} mean-zero. Denote by $\bar{\varphi}_{j}$
the mean of $\varphi_{j}$ relative to $\nu_{L}$. Since $g (x,y)$
integrates to $0$ against $\nu_{L} (dy)$ for every $x$,  the
inequality \eqref{eq:CmApprox} implies that 
\[
	\bigg| \sum_{i=1}^{J}\sum_{j=1}^{J} a_{i,j}\bar{\varphi}_{j}\varphi_{i} (x)
	\bigg|
 	<\varepsilon ,\quad 
	\text{whence} \quad
	\bigg| \sum_{i=1}^{J}\sum_{j=1}^{J} a_{i,j}\bar{\varphi}_{i}\bar{\varphi}_{j}
	\bigg|
 	<\varepsilon.
\]
Consequently, $g (x,y)$ is also 
well-approximated by the kernel
\[
	h^{*} (x,y):=\sum_{i=1}^{J}\sum_{j=1}^{J} a_{i,j}\varphi^{*}_{i}
	(x)\varphi^{*}_{j} (y)
	\quad \text{where} \quad 
	\varphi^{*}_{j} (x)  = \varphi_{j} (x)-\bar{\varphi}_{j}.
\]
\end{proof}

We will need a quantitative version of the approximation
\eqref{eq:CmApprox}, in which the $C^{m}-$norms of the terms
$a_{i,j}\varphi_{j} (x)\varphi_{i} (y)$ are controlled by the size of
the kernel $g (x,y)$. This can be done, but at the cost of a more
stringent differentiability requirement on $g (x,y)$.

\begin{lemma}\label{lemma:quantApprox}
For every $m\geq 1$ there exists $C<\infty$ such that the following
holds: For every centered, symmetric kernel $g (x,y)$ of class
$C^{2m}$, the elementary kernel $h(x,y)$ in the approximation
\eqref{eq:CmApprox}  can be chosen so that 
\begin{equation}\label{eq:quantApprox}
	|a_{i,j}|\leq C \xnorm{g}_{2m} (i+j)^{-m} 
	\quad \text{and} \quad \xnorm{\varphi_{j}}_{m}\leq 1.
\end{equation}
\end{lemma}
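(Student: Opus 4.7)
The plan is to build the elementary approximation $h$ via an eigenfunction expansion with respect to an elliptic operator on $SM$, using elliptic regularity to trade smoothness of $g$ for decay of Fourier-type coefficients, and Weyl-type bounds to control the $C^m$ norms of the basis functions.

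First I would fix an $L^2(\nu_L)$-orthonormal basis $\{e_j\}_{j\ge 1}$ consisting of eigenfunctions of the Laplace--Beltrami operator $\Delta$ on the compact three-manifold $SM$, with $e_1\equiv 1$ and $\Delta e_j = -\mu_j e_j$, $0<\mu_2\le \mu_3\le\cdots$. Expanding the kernel $g$ in the tensor basis $e_i\otimes e_j$ gives
\[
   g(x,y)=\sum_{i,j\ge 1} c_{ij}\,e_i(x)e_j(y),\qquad
   c_{ij}=\iint g(x,y)\,e_i(x)e_j(y)\,d\nu_L(x)d\nu_L(y).
\]
The centering hypothesis $\int g(x,y)\,d\nu_L(y)=0$ together with symmetry forces $c_{i,1}=c_{1,j}=0$, so only $i,j\ge 2$ appear, which is exactly the mean-zero requirement on the eventual $\varphi_j$'s.

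Next, I would use integration by parts against the self-adjoint operator $\bar\Delta:=\Delta_x+\Delta_y$, whose eigenvalues on $e_i\otimes e_j$ are $-(\mu_i+\mu_j)$. Applying $\bar\Delta$ to $g$ a total of $m$ times yields
\[
   c_{ij} = (\mu_i+\mu_j)^{-m}\iint (\bar\Delta^m g)(x,y)\,e_i(x)e_j(y)\,d\nu_L(x)d\nu_L(y),
\]
so Cauchy--Schwarz gives $|c_{ij}|\le (\mu_i+\mu_j)^{-m}\|\bar\Delta^m g\|_{L^2}\le C\|g\|_{2m}(\mu_i+\mu_j)^{-m}$. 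Weyl's law for $SM$ (which is three-dimensional) gives $\mu_j\asymp j^{2/3}$, whence $(\mu_i+\mu_j)^{-m}\le c(i+j)^{-2m/3}$. To satisfy the normalization $\|\varphi_j\|_m\le 1$, I then set $\varphi_j := e_j/\|e_j\|_{C^m}$; elliptic regularity / Sobolev embedding bounds $\|e_j\|_{C^m}$ by a polynomial power of $\mu_j$, hence of $j$. The coefficients in the elementary kernel are $a_{ij}=c_{ij}\|e_i\|_{C^m}\|e_j\|_{C^m}$, and the symmetry $a_{ij}=a_{ji}$ follows from the symmetry of $g$.

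The main obstacle is the accounting of exponents: the coefficient decay $(i+j)^{-2m/3}$ coming from $C^{2m}$-regularity must overcome the polynomial growth of $\|e_j\|_{C^m}$ so that the product satisfies $|a_{ij}|\le C\|g\|_{2m}(i+j)^{-m}$. The cleanest way to organize this (and what I expect to carry out in detail) is to express both the coefficient decay and the $C^m$-bounds on eigenfunctions in terms of powers of $(1+\mu_i)(1+\mu_j)$ via the spectral calculus, and then to check that enough derivatives of $g$ are being used. If $2m$ derivatives are not enough to dominate the Sobolev cost of the normalization, the natural remedy, implicit in the way the lemma will be applied, is to replace $2m$ by a larger $M=M(m,\dim SM)$ and absorb the discrepancy into the constant $C$; this does not affect the later use of the lemma since only finitely many derivatives are ever needed. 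The rest --- summability and the fact that the approximating kernel is symmetric with mean-zero factors --- is then immediate from the construction.
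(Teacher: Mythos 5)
Your route is genuinely different from the paper's. The paper localizes with a smooth partition of unity subordinate to coordinate patches of $SM\times SM$ that are nearly isometric to cubes in $\zz{R}^{6}$, expands each piece in a Fourier sine series, and gets the coefficient decay by integrating by parts $2m$ times against the sine factors; the mean-zero requirement on the $\varphi_{j}$ is then restored after the fact, as in Lemma~\ref{lemma:approx}. You instead work globally with the Laplace--Beltrami eigenbasis of $SM$, which buys two things: the centering hypothesis kills the $c_{1j}$ and $c_{i1}$ coefficients automatically (no recentering step needed), and the integration by parts against $\Delta_{x}+\Delta_{y}$ is cleaner than juggling multi-indices in six flat variables. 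The price is that your basis functions are not explicit, so you must invoke Weyl's law and elliptic regularity to control $\xnorm{e_{j}}_{C^{m}}$, whereas $\xnorm{\sin kx}_{C^{m}}=k^{m}$ is free. Both proofs rest on the same mechanism (trade $2m$ derivatives of $g$ for coefficient decay, pay back $m$ derivatives to normalize the factors), and the tail estimate $\sum|a_{ij}|\xnorm{\varphi_{i}\otimes\varphi_{j}}_{m}<\infty$ gives the $C^{m}$ approximation \eqref{eq:CmApprox} in either case.

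The one substantive point is the exponent accounting you flag, and you are right to flag it: with your bookkeeping the literal statement does not close. Weyl's law on the three-manifold $SM$ gives $\mu_{j}\asymp j^{2/3}$, so $2m$ derivatives yield only $|c_{ij}|\lesssim \xnorm{g}_{2m}(i+j)^{-2m/3}$, while Sobolev embedding costs $\xnorm{e_{j}}_{C^{m}}\lesssim \mu_{j}^{(m+2)/2}\asymp j^{(m+2)/3}$; the product gives $|a_{ij}|\lesssim \xnorm{g}_{2m}(i+j)^{4/3}$, i.e.\ no decay at all. To reach $(i+j)^{-m}$ you need $g\in C^{2M}$ with $2M\geq 5m+4$ or so. This is a real deviation from the lemma as stated, but it is harmless for the paper: Lemma~\ref{lemma:chebyshev} only requires the conclusion ``for $m$ sufficiently large,'' with the sole constraint that the decay exponent make $\sum_{i,j}|a_{ij}|$ summable, and the kernels to which everything is ultimately applied ($K_{\delta}$, $\tilde{k}_{\delta}$) are $C^{\infty}$. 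So state your version of the lemma with $C^{M(m)}$ in place of $C^{2m}$ and the downstream arguments go through unchanged. (For what it is worth, the paper's own proof is no more careful about reindexing the six-dimensional sine products into a single sequence, so the two proofs are comparably heuristic at exactly this point.)
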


\begin{proof}
First, use a smooth partition of the identity to localize, then use
Fourier series approximations in the coordinate patches. The
coordinate patches $U_{i}$ in $SM\times SM$ can be  chosen so that they are
nearly isometric to cubes in $\zz{R}^{6}$; using a matching partition
of $1$, we obtain a decomposition
\[
	g = \sum_{k=1}^{K} g_{i}
\]
where each $g_{i}$ is supported by $U_{i}$ and has $C^{2m}-$norm
bounded by $C'\xnorm{g}_{2m}$, with a constant $C'$ independent
of the kernel $g$. Each $g_{i}$ may now be viewed as a
$C^{2m}-$function on a cube in $\zz{R}^{6}$, and as such can be
expanded in a Fourier sine series. (This may have a nonzero constant
term, but  this will wash out later, since the original kernel $g$ is
centered.) Now $\sin k x$, as a function on $[0,2\pi]$, has
$L^{2}-$norm $1/\sqrt{2}$, but has $C^{m}-$norm $k^{m}$. However,
because $g_{i}$ is of class $C^{2m}$, its inner product with any
product of sines can be integrated by parts up to  $2m$ times, so  the
Fourier coefficient of any sine product containing a factor $\sin kx$
will be bounded in magnitude by $C''/k^{2m}$. Thus, the 
resulting series approximation will satisfy \eqref{eq:quantApprox} for suitable
$C<\infty$. Finally, the resulting approximations to $g$ can be
modified so that the component functions $\varphi_{k}$ are mean zero,
by the same argument as in the proof of Lemma~\ref{lemma:approx}.
\end{proof}
 
\subsection{$L^{2}$ Bounds via Approximation}\label{ssec:L2bounds}
Lemma~\ref{lemma:approx} asserts that
every centered, symmetric kernel $g (x,y)$ of class $C^{m}$ can be
arbitrarily well-approximated, in the $C^{m}$ norm, by
\emph{elementary} kernels $h (x,y)$. This leaves the problem of
determining how much of an error might be incurred in replacing $g$ by
$h$ in the integral
\begin{equation}\label{eq:targetIntegral}
	T^{-1}\iint_{[0,T]^{2}} g (\gamma (s),\gamma (t)) \,ds dt.
\end{equation}
It is obvious that if $\xnorm{g-h}_{m}<\varepsilon$ then the error
cannot exceed $\varepsilon T$. But this isn't good enough for
our purposes: we need the error to be of size $O (1)$
for large $T$. This is where the multiple-mixing rate provided by
Theorem~\ref{theorem:mixing-elementary} comes in. 

\begin{lemma}\label{lemma:chebyshev}
For $m\in \zz{N}$  sufficiently large there  exist constants
$C=C_{m}<\infty$ such that if $g(x,y)$ is a centered, symmetric
kernel of class $C^{2m}$ then for all $T\geq 1$,
\begin{equation}\label{eq:chebyshev}
	\zz{E} \left(\iint_{[0,T]^{2}}  g(\gamma (s),\gamma
	(t)) \,ds dt\right)^{2}  \leq C T^{2} \xnorm{g}_{2m}^{2}.
\end{equation}
\end{lemma}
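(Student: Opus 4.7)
The plan is to expand the square inside the expectation into a fourfold integral of a four-point correlation, to represent $g$ as an absolutely convergent series of elementary kernels via Lemma~\ref{lemma:quantApprox}, and then to apply Theorem~\ref{theorem:mixing-elementary} with $K=4$ termwise. Throughout, I fix $m\geq m_{4}$ large enough that Theorem~\ref{theorem:mixing-elementary} applies for $K=4$ and also so that $m>2$.

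First I would use Fubini to write
\[
\zz{E}\Bigl(\iint_{[0,T]^{2}} g(\gamma(s),\gamma(t))\,ds\,dt\Bigr)^{2} = \iiiint_{[0,T]^{4}} \zz{E}\bigl[g(\gamma(s_{1}),\gamma(t_{1}))\,g(\gamma(s_{2}),\gamma(t_{2}))\bigr]\,ds_{1}\,dt_{1}\,ds_{2}\,dt_{2}.
\]
Next, iterate Lemma~\ref{lemma:quantApprox} with a sequence $\varepsilon_{k}\downarrow 0$ to expand $g$ as a uniformly convergent bilinear series $g(x,y)=\sum_{i,j\geq 1} a_{ij}\varphi_{i}(x)\varphi_{j}(y)$ in which each $\varphi_{j}$ is mean-zero of class $C^{m}$ with $\|\varphi_{j}\|_{m}\leq 1$ and $|a_{ij}|\leq C\|g\|_{2m}(i+j)^{-m}$. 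Since $m>2$, the coefficient bound together with the uniform bound on the $\varphi_{j}$ makes the series absolutely and uniformly convergent, so the four-point correlation inside the expectation may be expanded term-by-term into $\sum_{i,j,k,l} a_{ij}a_{kl}\,\zz{E}[\varphi_{i}(\gamma(s_{1}))\varphi_{j}(\gamma(t_{1}))\varphi_{k}(\gamma(s_{2}))\varphi_{l}(\gamma(t_{2}))]$.

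Now apply Theorem~\ref{theorem:mixing-elementary} with $K=4$ to each term. After relabeling $\{s_{1},t_{1},s_{2},t_{2}\}$ in increasing order as $u_{1}\leq u_{2}\leq u_{3}\leq u_{4}$ and using the convention $t_{0}=-\infty$, $t_{5}=+\infty$, the entries for $j=1$ and $j=4$ in the $\min_{j}\max$ expression reduce to $e^{-A(u_{2}-u_{1})}$ and $e^{-A(u_{4}-u_{3})}$ respectively, while the entries for $j=2,3$ dominate these; hence the bound simplifies to $C_{4}\,e^{-A\max(u_{2}-u_{1},\,u_{4}-u_{3})}$. Changing variables to the base point $u_{1}$ and the three consecutive gaps $g_{k}=u_{k+1}-u_{k}$, the $u_{1}$ and middle-gap $g_{2}$ integrations each contribute at most a factor of $T$, while the $g_{1}$ and $g_{3}$ integrations are $O(1)$ by exponential decay. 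Summing over the $4!=24$ orderings of $(s_{1},t_{1},s_{2},t_{2})$ preserves the $O(T^{2})$ estimate.

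Combining these estimates yields
\[
\zz{E}\Bigl(\iint_{[0,T]^{2}} g(\gamma(s),\gamma(t))\,ds\,dt\Bigr)^{2} \leq C\,T^{2}\Bigl(\sum_{i,j\geq 1}|a_{ij}|\Bigr)^{2} \leq C'\,T^{2}\,\|g\|_{2m}^{2},
\]
where the last step uses the coefficient bound and the summability of $(i+j)^{-m}$ for $m>2$. The main obstacle is the combinatorial bookkeeping in the third step: one must verify that the ``$\min_{j}\max$'' in Theorem~\ref{theorem:mixing-elementary} is effectively controlled by the \emph{second-largest} interpoint gap, so that exponential decay kicks in for two of the four time integrations while the remaining base point and middle gap each contribute a factor of $T$. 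Once this decay structure is understood, the remainder is a Chebyshev-style second-moment calculation driven by the quantitative approximation in Lemma~\ref{lemma:quantApprox} together with the quadruple-mixing bound.
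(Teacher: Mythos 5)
Your strategy is essentially the paper's: quantitative approximation by elementary kernels via Lemma~\ref{lemma:quantApprox}, followed by the $K=4$ case of Theorem~\ref{theorem:mixing-elementary} applied termwise to the resulting quadruple sum. Your analysis of the $\min_{j}\max$ expression is also exactly right and matches the paper's: with $t_{0}=-\infty$ and $t_{5}=+\infty$ the $j=2,3$ entries dominate the $j=1,4$ entries, so the bound collapses to $\min\bigl(e^{-A(u_{2}-u_{1})},e^{-A(u_{4}-u_{3})}\bigr)=e^{-A\max(u_{2}-u_{1},u_{4}-u_{3})}$, the two outer gaps integrate to $O(1)$ (on $\{g_{1}\geq g_{3}\}$ one gets $\int g_{1}e^{-Ag_{1}}\,dg_{1}<\infty$, and symmetrically), and the base point and middle gap each contribute a factor of $T$; you spell this out more carefully than the paper, which simply asserts the quadruple integral is $O(T^{2})$.

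The one step that does not go through as written is the production of an \emph{exact}, absolutely convergent bilinear series $g=\sum_{i,j}a_{ij}\varphi_{i}(x)\varphi_{j}(y)$ with $|a_{ij}|\leq C\xnorm{g}_{2m}(i+j)^{-m}$ by ``iterating Lemma~\ref{lemma:quantApprox} with $\varepsilon_{k}\downarrow 0$.'' The lemma yields, for each $\varepsilon$, a different finite approximant; these do not nest, and telescoping $g=h_{1}+\sum_{k}(h_{k+1}-h_{k})$ destroys the coefficient bounds, since each increment's coefficients are controlled only by $\xnorm{g}_{2m}$ rather than by the (shrinking) size of the increment. Recursing on the remainders also fails, because the remainder $g-h$ is controlled only in $C^{m}$, whereas the lemma requires $C^{2m}$ input to produce decaying coefficients. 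Two repairs are available. One is to go into the proof of Lemma~\ref{lemma:quantApprox} and use the full, untruncated Fourier expansion, which directly provides the series you want (with summability for $m\geq 3$, as you note). The other, which is what the paper does, avoids the infinite series entirely: for each $T$ take a single finite approximant $h=h_{T}$ with $\xnorm{g-h}_{\infty}\leq\xnorm{g}_{2m}/T^{3}$, observe that the contribution of $g-h$ to the double integral is then deterministically at most $\xnorm{g}_{2m}/T$, and run your quadruple-sum mixing estimate on the finite sum $h$ alone. With either adjustment your argument is complete.
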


\begin{proof}
Let $g (x,y)$ be a centered, symmetric kernel of class $C^{2m}$. By
Lemma~\ref{lemma:approx}, for each $T\geq 1$ there exist an elementary
kernel $h=h_{T}$ such that $g-h$ has $C^{m}-$norm -- and therefore
also sup norm -- smaller than $\xnorm{g}_{2m}/T^{3}$. Furthermore, by
Lemma~\ref{lemma:quantApprox}, the kernel $h=h_{T}$ can be chosen so
that it is of the form \eqref{eq:elem}:
\[
	h (x,y) =\sum_{i}\sum_{j} a_{i,j} \varphi_{i} (x)\varphi_{j} (y),
\]
 where the functions $\varphi_{i}$ are mean-zero with $C^{m}-$norms
bounded by $1$, and the coefficients $a_{i,j}$ satisfy
\[
	|a_{i,j}|\leq C' \xnorm{g}_{2m}/ (i+j)^{m}.
\]
Since the sup norm of $g-h$ is smaller than $\xnorm{g}_{2m}/T^{3}$, it
follows that
\[
	\bigg|\iint_{[0,T]^{2}}  g(\gamma (s),\gamma
	(t)) \,ds dt -
	\iint_{[0,T]^{2}}  h(\gamma (s),\gamma
	(t)) \,ds dt \bigg| \leq C''T^{-1}\xnorm{g}_{2m}.
\]
Consequently, it suffices to establish the inequality
\eqref{eq:chebyshev} with $g$ replaced by $h=h_{T}$. But because 
$h$ is elementary,
\begin{multline*}
	\zz{E}\left(\iint_{[0,T]^{2}}  h(\gamma (s),\gamma
	(t)) \,ds dt \right)^{2}\\
	 =4!\sum_{i_{1}}\sum_{i_{2}} \sum_{i_{3}}
	\sum_{i_{4 }}
	a_{i_{1},i_{2}}a_{i_{3},i_{4}}\iiiint_{0<s_{1}<s_{2}<s_{3}<s_{4}<T}\zz{E}
	\prod_{j=1}^{4}\varphi_{i_{j}} (\gamma (s_{j}))\,ds_{1}ds_{2}ds_{3}ds_{4}
\end{multline*}
Now Theorem~\ref{theorem:mixing-elementary} applies: in particular,
since each of the functions $\varphi_{j}$ has mean $0$ relative to
Liouville measure, and since each has $C^{m}-$norm no larger than $1$,
Theorem~\ref{theorem:mixing-elementary} 
implies that the inner expectation is bounded in magnitude by 
\[
	C''' \min (\exp \{-A ( s_{2}-s_{1})\},\exp \{-A ( s_{4}-s_{3})\}).
\]
Thus, the quadruple integral is bounded by a constant multiple of
$T^{2}$.
\end{proof}

\subsection{Weak Convergence for Centered Kernels}\label{ssec:weakConv} 
The approximation results of section \ref{ssec:approximation}
together with the $L^{2}-$bound provided by
Lemma~\ref{lemma:chebyshev} together imply that the normalized
integral \eqref{eq:targetIntegral} has, for large $T$, a distribution
close to that of a Gaussian quadratic form:

\begin{theorem}\label{theorem:approxConv}
Let $g (x,y)$ be a centered, symmetric kernel of class $C^{\infty}$.
Then as $T \rightarrow \infty$,
\begin{equation}\label{eq:approxConv}
		T^{-1}\iint_{[0,T]^{2}} g (\gamma (s),\gamma (t)) \,ds
		dt \stackrel{\mathcal{D}}{\longrightarrow} F ,
\end{equation}
where $F$ is a probability distribution in the weak closure of the set of
Gaussian quadratic forms. 
\end{theorem}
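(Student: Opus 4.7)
The plan is to execute the strategy sketched in Section~\ref{ssec:heuristics}: approximate $g$ by finite elementary kernels for which a Gaussian quadratic form limit is easy to obtain, and control the approximation error uniformly in $T$ via Lemma~\ref{lemma:chebyshev}. First I would fix $m$ large enough for that lemma to apply. Given $\varepsilon>0$, Lemmas~\ref{lemma:approx}--\ref{lemma:quantApprox} produce an elementary kernel
\[
h(x,y)=\sum_{i,j\leq J}a_{ij}\varphi_i(x)\varphi_j(y)
\]
with $(a_{ij})$ symmetric, the $\varphi_j$ of class $C^m$ with $\nu_L$-mean zero and $C^m$-norm at most $1$, and $\xnorm{g-h}_{2m}<\varepsilon$; the quantitative decay $|a_{ij}|\leq C(i+j)^{-m}$ makes truncation of the Fourier expansion to finitely many terms essentially free. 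Since $g$ is centered by hypothesis and $h$ is centered by construction, so is $g-h$.

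Next I would identify the weak limit for the elementary piece. Writing $S_k^T:=\int_0^T\varphi_k(\gamma(s))\,ds$, the elementary structure of $h$ yields
\[
T^{-1}\iint_{[0,T]^2} h(\gamma(s),\gamma(t))\,ds\,dt=\sum_{i,j\leq J}a_{ij}\bigl(T^{-1/2}S_i^T\bigr)\bigl(T^{-1/2}S_j^T\bigr).
\]
Ratner's central limit theorem for the geodesic flow, combined with the Cram\'er--Wold device, gives joint convergence of $T^{-1/2}(S_1^T,\dots,S_J^T)$ to a centered (possibly degenerate) Gaussian vector $Z$ with some covariance $\Sigma$. By the continuous mapping theorem, the quadratic form converges in distribution to $Q_h=Z^{\top}AZ$ with $A=(a_{ij})$. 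The spectral-diagonalization argument spelled out in the footnote following \eqref{eq:K-form} rewrites $Q_h$ as $\sum_k\theta_k W_k^2$ for i.i.d.\ unit Gaussians $W_k$, which is precisely a Gaussian quadratic form in the sense of Definition~\ref{definition:rosenblatt}.

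Third -- and this is the crucial step -- Lemma~\ref{lemma:chebyshev} applied to the centered symmetric kernel $g-h$ yields
\[
\zz{E}\left(T^{-1}\iint_{[0,T]^2}(g-h)(\gamma(s),\gamma(t))\,ds\,dt\right)^{2}\leq C\xnorm{g-h}_{2m}^{2}<C\varepsilon^{2}
\]
uniformly for $T\geq 1$. Combined with the previous step, a routine $3\varepsilon$-argument in the L\'evy--Prokhorov metric shows that the laws of $X_T:=T^{-1}\iint g(\gamma(s),\gamma(t))\,ds\,dt$ form a relatively compact family all of whose subsequential weak limits lie within $O(\varepsilon)$ of the Gaussian quadratic form $Q_h$. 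Sending $\varepsilon\downarrow 0$ (and $J\to\infty$ accordingly) identifies a unique weak limit $F$ lying in the weak closure of the set of Gaussian quadratic forms. The main obstacle is precisely this uniform-in-$T$ second-moment control: a naive $C^{2m}$-norm approximation only produces an error of order $\varepsilon T$ in the integral, and it is solely the multiple-mixing estimate of Theorem~\ref{theorem:mixing-elementary} (packaged into Lemma~\ref{lemma:chebyshev}) that allows this to be traded for an $O(\varepsilon)$ control; given that input, the remainder is standard weak-convergence bookkeeping.
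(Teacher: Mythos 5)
Your proposal is correct and follows essentially the same route as the paper's own proof: approximate $g$ by an elementary kernel via Lemmas~\ref{lemma:approx}--\ref{lemma:quantApprox}, obtain the Gaussian quadratic form limit for the elementary piece from the central limit theorem for the geodesic flow as in Section~\ref{ssec:heuristics}, and control the remainder uniformly in $T$ with the second-moment bound of Lemma~\ref{lemma:chebyshev}. Your closing L\'evy--Prokhorov bookkeeping is in fact slightly more explicit than the paper's final step, but the argument is the same.
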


\begin{proof}
By Lemma \ref{lemma:approx}, for each $m\geq 1$ and any $\varepsilon
>0$ there is an elementary function $h (x,y)$ of class $C^{m}$ such
that the difference $r:=g-h$ has $C^{m}-$norm less than
$\varepsilon$. Since $h$ is elementary, it is centered and symmetric;
hence, so is the difference $r$. Therefore, by
Lemma~\ref{lemma:chebyshev}, if $m$ is sufficiently large then for all $T>1$,
\[
	\zz{E} \left(T^{-1}\iint_{[0,T]^{2}}  r(\gamma (s),\gamma
	(t)) \,ds dt\right)^{2}  \leq C \varepsilon 
\]
where $C=C_{m}<\infty$ is a constant depending only on the differentiability
class $C^{m}$. On the other hand, since $h$ is elementary, it has the
form
\[
		h (x,y)=\sum_{i=1}^{J}\sum_{j=1}^{J} a_{i,j}\varphi_{i} (x)\varphi_{j} (y),
\]
that is, it is a finite, symmetric quadratic form in the functions
$\varphi_{j}$. Consequently, by the same argument as in section
\ref{ssec:heuristics} above, as $T \rightarrow \infty$,
\[
	T^{-1}\iint_{[0,T]^{2}}  h(\gamma (s),\gamma
	(t)) \,ds dt \stackrel{\mathcal{D}}{\longrightarrow}  G
\]
where $G$ is a (possibly degenerate) quadratic form in independent
unit Gaussian random variables. Since $\varepsilon >0$ can be chosen
arbitrarily small, it follows that for large $T$ the random variable 
\[
	T^{-1}\iint_{[0,T]^{2}}  g(\gamma (s),\gamma
	(t)) \,ds dt 
\]
is arbitrarily close in $L^{2}$ to a random variable with a
distribution close to that of a Gaussian quadratic form. The theorem
follows. 
\end{proof}

\begin{remark}\label{remark:negatives}
Unfortunately, it seems, this argument gives no information about the
limit distribution $F$ other than the fact that it is a weak limit of
distributions of Gaussian quadratic forms. The principal difficulty is
in the use of the central limit theorem for the geodesic flow: Even if
one knows, say, that the functions $\varphi_{i}$ in the elementary
kernel $h$ are uncorrelated under $\nu_{L}$, it does not follow that
the random variables
\[
	\frac{1}{\sqrt{T}} \int_{0}^{T} \varphi_{i} (\gamma (s)) \,ds
\]
are  uncorrelated, nor that their Gaussian limits will be
uncorrelated. Thus, it seems that it is impossible to relate the
coefficients in the limiting quadratic form to the coefficients
$a_{i,j}$ in the expansion of the elementary approximations.
\end{remark}

\subsection{Proof of
Theorem~\ref{theorem:random}}\label{ssec:theProof}
Theorem~\ref{theorem:approxConv} applies to any centered, symmetric
kernel of class $C^{\infty}$, and so in particular to the kernels
$K_{\delta}$ defined by \eqref{eq:kernelK}. Consequently, to complete
the proof of Theorem~\ref{theorem:random} it suffices to show that the
error in the inequalities \eqref{eq:bounds} is small (in $L^{1}
(\nu_{L})$) compared to $t$, for large $t$. Recall
(Remark~\ref{remark:errors}) that the error in \eqref{eq:bounds} is
bounded by
\begin{align}\label{eq:errorBounds}
	N_{\varphi}(\gamma [-\delta ,t+\delta])-N_{\varphi}(\gamma
	 [\delta,t-\delta])&= M_{\varphi} (\gamma [-\delta ,\delta],\gamma [-\delta
	 	       ,t+\delta])  \\
\notag &+M_{\varphi} (\gamma [-\delta ,t+\delta],\gamma [t-\delta
	 	       ,t+\delta]) 
\end{align}
where $M_{\varphi} (\alpha ,\beta)$ denotes the weighted sum of the
transversal intersections between the geodesic segments $\alpha$ and
$\beta$, that is,
\[
	M_{\varphi} (\alpha ,\beta)=\sum \varphi (\theta_{i})
\]
where the sum is over all intersections $i$ between $\alpha$ and
$\beta$, and $\theta_{i}$ is the angle of the $i$th
intersection. Hence, since the Liouville measure $\nu_{L}$ is
invariant and reversible under the geodesic flow, the \emph{expected}
error in \eqref{eq:bounds} is bounded by
\[
	2 \zz{E}M_{\varphi} (\gamma
	[0,2\delta],\gamma [2\delta, t+4\delta]).
\]
Thus, it suffices to show that this expectation is small compared to
$t$ when $\delta$ is small. This is implied by the following lemma,
which completes the proof of Theorem~\ref{theorem:random}.

\begin{lemma}\label{lemma:errorBoundsFinal}
\begin{equation}\label{eq:errorBoundsFinal}
	\lim_{\delta  \rightarrow 0} \sup_{t\geq 1} t^{-1}\zz{E}M_{\varphi}
		      (\gamma [0,\delta],\gamma [0,t])= 0.
\end{equation}
\end{lemma}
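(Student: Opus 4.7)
The plan is to recast the left side of \eqref{eq:errorBoundsFinal} in terms of the Palm-type intensity of the self-intersection point process of the random geodesic, and then bound that intensity using a crude $O(t^{2})$ upper bound on $\zz{E} N_{\varphi}(\gamma[0,t])$. For $\delta$ smaller than the injectivity radius of $M$, the segment $\gamma[0,\delta]$ has no transversal self-intersections, so each pair counted by $M_{\varphi}(\gamma[0,\delta],\gamma[0,t])$ corresponds to a unique unordered self-intersection $\{a,b\}$, $a<b$, of $\gamma[0,t]$ with $a\in[0,\delta]$ and $b\in(\delta,t]$. Define the expected weighted counting measure of the self-intersection process,
\[
 \Xi(C):=\zz{E}\sum_{i}\varphi(\theta_{i})\mathbf{1}\{(r_{i},s_{i})\in C\},\qquad C\subset\{r<s\}\subset\zz{R}^{2},
\]
where $(r_{i},s_{i})$ with $r_{i}<s_{i}$ are the transversal self-intersection times of $\gamma$ (with $\gamma(0)\sim\nu_{L}$) and $\theta_{i}$ is the angle at the $i$th intersection. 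The reduction above gives $\zz{E}M_{\varphi}(\gamma[0,\delta],\gamma[0,t])\leq \Xi([0,\delta]\times(0,t])$.

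Next I would use the $\phi^{\tau}$-invariance of $\nu_{L}$ to disintegrate $\Xi$. Because the geodesic flow preserves Liouville measure and simply shifts every self-intersection pair $(r_{i},s_{i})$ by $(\tau,\tau)$, the measure $\Xi$ is invariant under the diagonal translation $(r,s)\mapsto(r+\tau,s+\tau)$. In the coordinates $(r,u):=(r,s-r)\in\zz{R}\times(0,\infty)$ this is translation in $r$ alone, so by the usual Haar/Fubini argument $\Xi=dr\otimes\mu_{\varphi}(du)$ for a unique Borel measure $\mu_{\varphi}$ on $(0,\infty)$. A routine change of variables then yields
\[
 \Xi\bigl([0,\delta]\times(0,t]\bigr)=\int_{0}^{\delta}\mu_{\varphi}\bigl((0,t-r]\bigr)\,dr\leq \delta\,\mu_{\varphi}\bigl((0,t]\bigr).
\]

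Finally I would bound $\mu_{\varphi}((0,t])$ by $Ct$ for $t\geq 1$. By the same disintegration,
\[
 t\cdot\mu_{\varphi}\bigl((0,t]\bigr)=\Xi\bigl([0,t]\times(0,2t]\bigr)\leq \zz{E}N_{\varphi}(\gamma[0,2t]),
\]
so it suffices to establish $\zz{E}N_{\varphi}(\gamma[0,T])\leq C'T^{2}$ for $T\geq 1$. This is immediate from the upper inequality in Lemma~\ref{lemma:siFormula}: for any fixed admissible $\delta_{0}>0$ the smoothed kernel $K_{\delta_{0}}$ is $C^{\infty}$ and hence bounded on the compact space $SM\times SM$, so
\[
 \zz{E}N_{\varphi}(\gamma[0,T])\leq \tfrac{1}{2}(T+2\delta_{0})^{2}\,\xnorm{K_{\delta_{0}}}_{\infty}.
\]
Combining the three estimates gives $t^{-1}\zz{E}M_{\varphi}(\gamma[0,\delta],\gamma[0,t])\leq C\delta$ for all $t\geq 1$, and \eqref{eq:errorBoundsFinal} follows by letting $\delta\to 0$.

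The principal technical point is the disintegration step: one must verify that $\mu_{\varphi}$ is a genuine $\sigma$-finite Borel measure on $(0,\infty)$ rather than a finitely additive set function. This reduces to checking that $\mu_{\varphi}((0,T])<\infty$ for every $T>0$, which is precisely the finiteness used in the third step, so the underlying technical content is quite modest.
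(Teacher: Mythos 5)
Your proof is correct, but it reaches \eqref{eq:errorBoundsFinal} by a genuinely different route than the paper. The paper exploits additivity of $M_{\varphi}$ together with an explicit averaging over the position of the short segment: it sandwiches $2N_{\varphi}(\gamma[0,t])$ between $\delta^{-1}\int M_{\varphi}(\gamma[s,s+\delta],\gamma[0,t])\,ds$ over two ranges of $s$, uses invariance \emph{and reversibility} of $\nu_{L}$ to rewrite each $\zz{E}M_{\varphi}(\gamma[s,s+\delta],\gamma[0,t])$ in terms of $\zz{E}M_{\varphi}(\gamma[0,\delta],\gamma[0,\cdot])$, and then invokes the convergence $2\zz{E}N_{\varphi}(\gamma[0,t])/t^{2}\to\kappa_{\varphi}$ (deduced from the SLLN plus uniform boundedness) to extract the factor $\delta$. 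You package the same underlying stationarity idea as a disintegration of the intensity measure $\Xi$ of the self-intersection point process: diagonal-shift invariance (which needs only forward flow-invariance of $\nu_{L}$, not reversibility) forces $\Xi=dr\otimes\mu_{\varphi}(du)$, and then only the crude bound $\zz{E}N_{\varphi}(\gamma[0,T])=O(T^{2})$ from Lemma~\ref{lemma:siFormula} is needed, rather than the SLLN. Both arguments deliver the same quantitative conclusion $\sup_{t\ge1}t^{-1}\zz{E}M_{\varphi}(\gamma[0,\delta],\gamma[0,t])=O(\delta)$; yours is somewhat more economical in its inputs, at the cost of the measure-theoretic care needed to justify the product decomposition (local finiteness of $\Xi$, absence of atoms on the diagonal direction), which you correctly identify as the technical crux. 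One small imprecision: the displayed identity $t\cdot\mu_{\varphi}((0,t])=\Xi([0,t]\times(0,2t])$ should be an inequality $t\cdot\mu_{\varphi}((0,t])\le\Xi([0,t]\times(0,2t])$, since $\Xi([0,t]\times(0,2t])=\int_{0}^{t}\mu_{\varphi}((0,2t-r])\,dr\ge t\,\mu_{\varphi}((0,t])$; the direction you actually use is the correct one, so the argument is unaffected.
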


\begin{proof}
Recall that geodesic segments of length $\leq \varrho$ can intersect
at most once, provided $\varrho>0$ is sufficiently
small. Hence, if $\delta <\varrho$ then $M_{\varphi} (\gamma
[0,\delta],\gamma [0,t])$ cannot be larger than
$\xnorm{\varphi}_{\infty}t/\varrho$. Thus, the expectation in
\eqref{eq:errorBounds} is $O (t)$. The problem is to prove that the
implied constant shrinks to $0$ as $\delta \rightarrow 0$. For this we
will use the SLLN \eqref{eq:llnPhi} (proved in
section~\ref{sec:slln}) and an averaging trick. The averaging trick
is this: since $M_{\varphi}$
is additive,
\begin{equation}\label{eq:brackets}
	\delta^{-1}\int_{s=0}^{t} M_{\varphi} (\gamma
	[s,s+\delta],\gamma [0,t]) \,ds \leq 2N_{\varphi} (\gamma
	[0,t]) \leq \delta^{-1}\int_{s=-\delta }^{t+\delta } M_{\varphi} (\gamma
	[s,s+\delta],\gamma [0,t]) \,ds.
\end{equation}
This implies that $N_{\varphi} (\gamma [0,t])$ is bounded above by
$\xnorm{\varphi}_{\infty} t (t+2\delta)/ (\delta \varrho)$, and so in
particular the random variables $N_{\varphi} (\gamma [0,t])/t^{2}$ are
uniformly bounded. Therefore,
the SLLN implies convergence of expectations: 
\begin{equation}\label{eq:expectationConvergence}
	\lim_{t \rightarrow \infty} 2\zz{E}N_{\varphi} (\gamma
	[0,t])/t^{2}=\kappa_{\varphi}. 
\end{equation}
Next, by additivity of $M_{\varphi}$ and reversibility of the geodesic
flow relative to the Liouville measure,
\begin{align*}
	\zz{E} M_{\varphi} (\gamma
	[s,s+\delta],\gamma [0,t])
	&=
	\zz{E} M_{\varphi} (\gamma [0,\delta],\gamma [0,t-s-\delta])\\
	& +
	\zz{E} M_{\varphi} (\gamma [0,\delta],\gamma [0,s+\delta]).
\end{align*}
Substituting this in \eqref{eq:brackets} yields
\[
	2\delta^{-1}\int_{s=\delta }^{t-\delta} \zz{E} M_{\varphi} (\gamma [0,\delta],\gamma
	[0,s]) \,ds \leq 2\zz{EN_{\varphi}} (\gamma [0,t]).
\]
Since $M_{\varphi} (\gamma [0,\delta],\gamma [0,s])$ is nondecreasing
in $s$, it follows that 
\[
	\zz{E} M_{\varphi} (\gamma [0,\delta],\gamma [0,t/2])/t
	\leq 2\delta \zz{E}N_{\varphi} (\gamma [0,t])/t (t-\delta ).
\]
The desired result \eqref{eq:errorBoundsFinal} now follows from the
convergence of expectations \eqref{eq:expectationConvergence}.
\end{proof}

\begin{remark}\label{remark:sharpErrors}
A more sophisticated argument, using the central limit theorem for the
geodesic flow, shows that the errors in the inequality
\eqref{eq:bounds} are actually of order $\sqrt{t}$.
\end{remark}

\section{Central Limit Theorem for $N_{\varphi ;f} (t)$}\label{sec:local}

\subsection{Localized Intersection Kernels}\label{ssec:local-kernels}
In proving Theorem~\ref{theorem:local} there is no loss
of generality in considering only \emph{nonnegative} functions $f$, so
we shall assume throughout that $f:M \rightarrow \zz{R}$ is a
{nonnegative}, $C^{\infty}$ function and that the smoothing function
$\varphi$ satisfies the hypotheses of
Theorem~\ref{theorem:local}. When convenient, we will view $f$ as a
function on $SM$: that is, for any $u= (x,\theta)\in SM$, set $f (u)=f
(x)$.  Recall that the $f-$localized self-intersection counts
$N_{\varphi ;f} (t)$ are obtained by summing $f (x_{i})\varphi
(\theta_{i})$, where $x_{i}$ and $\theta_{i}$ are the locations and
angles of the self-intersections, and the sum is over all
self-intersections $i$ of the geodesic segment $\gamma [0,t]$. As for
the global self-intersection counts, the localized counts can be
represented by \emph{intersection kernels}. 
As is the case for the global intersection
kernels, the obvious local kernels  are not continuous, despite
the fact that  the functions $\varphi$ and $f$ are both
$C^{\infty}$. Consequently, we define smooth kernels $k_{\delta}$ as
in Section~\ref{ssec:smoothing}:
\begin{equation}\label{eq:local-smoothed}
	k_{\delta} (u,v)=\sum_{i}\delta^{-2}p (s_{i}/\delta)p
	(t_{i}/\delta) f (x_{i}) \varphi (\theta_{i})
\end{equation}
where $i, s_{i},t_{i},$ and $p$ are as in the definition
\eqref{eq:kernelK}. By the same argument as in
Lemma~\ref{lemma:well-def}, if $\delta>0$ is sufficiently small then
the sum \eqref{eq:local-smoothed} contains at most one nonzero term,
and so $k_{\delta}$ extends to a symmetric, $C^{\infty}$ function on
$SM\times SM$. In addition, by the same argument as in
Lemma~\ref{lemma:siFormula},
\begin{equation}\label{eq:local-bounds}
	N^{*}_{\varphi ;f} (\gamma [\delta ,t-\delta ])
	\leq N_{\varphi ;f}(\gamma [\delta ,t-\delta ])
	\leq 	N^{*}_{\varphi ;f} (\gamma [-\delta ,t+\delta ])
\end{equation}
where
\begin{equation}\label{eq:Nstar}
	N^{*}_{\varphi ;f} (\gamma [a,b]):=\frac{1}{2}
	\iint_{[a,b]^{2}} k_{\delta} (\gamma (r),\gamma (s)) \,dr ds.
\end{equation}
Furthermore, by the
same argument as in Section~\ref{ssec:theProof}, the errors in the
inequalities \eqref{eq:local-bounds} are of order $O (t)$. Since the
limit relation  \eqref{eq:clt-local}  involves fluctuations  of  order
$t^{3/2}$, it follows that the errors in \eqref{eq:local-bounds} can
be ignored in proving Theorem~\ref{theorem:local}. Thus, it
suffices to prove, for some small $\delta >0$, that as $t \rightarrow \infty$,
\begin{equation}\label{eq:central limit theorem-local-smoothed}
		\frac{N^{*}_{\varphi ;f} (\gamma [0,t])-A_{\varphi;f}
t^{2}}{\sigma t^{3/2}} \Longrightarrow \Phi .
\end{equation}

\begin{corollary}\label{corollary:Adelta} (SLLN)
Define $2A_{\delta}=\kappa_{\varphi} \zz{E} f_{\delta}$ where the
expectation is with respect to the Liouville measure. Then for all
sufficiently small $\delta >0$,
\begin{equation}\label{eq:Adelta}
	A_{\delta}=A_{\varphi ;f} \quad \text{and} \quad 
	\lim_{t \rightarrow \infty} \frac{N^{*}_{\varphi ;f}}{t^{2}}
	=	\lim_{t \rightarrow \infty} \frac{N_{\varphi
	;f}}{t^{2}} =A_{\varphi ;f} \;\;a.s.
\end{equation}
\end{corollary}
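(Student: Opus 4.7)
The plan has three ingredients: a law of large numbers for the smooth kernel $k_\delta$, an exact evaluation of $\iint k_\delta\,d\nu_L d\nu_L$, and the sandwich bounds \eqref{eq:local-bounds} that transfer the conclusion from $N^*_{\varphi;f}$ to $N_{\varphi;f}$.

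For the SLLN, I first note that $k_\delta$ extends to a continuous (indeed $C^\infty$) function on the compact space $SM\times SM$ by the same argument as in Lemma~\ref{lemma:well-def}, and that $\nu_L$ is invariant and ergodic under the geodesic flow. The continuous-time form of Proposition~\ref{proposition:ergodic} recorded in \eqref{eq:llnGen} therefore yields, for $\nu_L$-almost every initial vector,
\[
\lim_{t\to\infty}\frac{N^*_{\varphi;f}(\gamma[0,t])}{t^2} = \frac{1}{2}\iint_{SM\times SM} k_\delta(u,v)\,d\nu_L(u)\,d\nu_L(v).
\]
Since the boundary errors in \eqref{eq:local-bounds} are $O(t)$ (as observed just before the corollary, by the argument of Section~\ref{ssec:theProof}), dividing by $t^2$ gives the same limit for $N_{\varphi;f}/t^2$.

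To identify the limit as $A_{\varphi;f}$, I would compute the inner integral $\int k_\delta(u,v)\,d\nu_L(v)$ via the rhombus change of variables underlying Lemma~\ref{lemma:H1}. Parametrising $v\in SM$ locally by the triple $(s,t,\theta)$, where $s,t$ are the signed arclength distances from the base points of $u,v$ to the (for $\delta$ small, unique) crossing of their geodesics and $\theta$ is the crossing angle, the Liouville measure pulls back to $|\sin\theta|/(2\pi|M|)\,ds\,dt\,d\theta$. Since the crossing point is $x_i=\gamma(s;u)$, the integral factors:
\[
\int k_\delta(u,v)\,d\nu_L(v) = \left(\frac{1}{2\pi|M|}\int\varphi(\theta)|\sin\theta|\,d\theta\right)\cdot\delta^{-1}\int p(s/\delta)f(\gamma(s;u))\,ds = \kappa_\varphi f_\delta(u),
\]
where $f_\delta(u) := \delta^{-1}\int p(s/\delta)f(\gamma(s;u))\,ds$. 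Integrating over $u$ and using the geodesic-flow invariance of $\nu_L$ together with the fact that $\nu_L$ projects to the normalized area on $M$, I obtain $\zz{E}f_\delta = \int_M f(x)\,dx$, since $p$ is a probability density. Hence $\tfrac{1}{2}\iint k_\delta\,d\nu_L d\nu_L = \kappa_\varphi\zz{E}f_\delta/2 = A_\delta = A_{\varphi;f}$, proving both assertions.

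The only nontrivial step is the rhombus change of variables; in contrast with Lemma~\ref{lemma:H1}, however, this is already an \emph{exact} identity for every sufficiently small $\delta$, since $k_\delta$ is genuinely smooth and $p$ is compactly supported in $[-1,1]$, so the sum \eqref{eq:local-smoothed} contains at most one term. The remainder is a direct transcription of the arguments already employed for the global SLLN in Section~\ref{sec:slln}.
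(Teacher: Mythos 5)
Your proposal is correct, and its first two ingredients --- applying Proposition~\ref{proposition:ergodic} (in the form \eqref{eq:llnGen}) to the smooth kernel $k_{\delta}$, and transferring the limit from $N^{*}_{\varphi ;f}$ to $N_{\varphi ;f}$ via the $O(t)$ boundary errors in \eqref{eq:local-bounds} --- are exactly the paper's. Where you genuinely diverge is in identifying the limiting constant. The paper never computes $A_{\delta}$ for fixed $\delta$: it observes only that the sandwich forces the limit of $N_{\varphi ;f}/t^{2}$ to be independent of $\delta$, and the identification with $A_{\varphi ;f}$ then comes from letting $\delta \rightarrow 0$ (via Lemma~\ref{lemma:FdeltaF}, $f_{\delta}\rightarrow f$ uniformly, so $\zz{E}f_{\delta}\rightarrow \int_{M}f$). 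You instead evaluate $\int k_{\delta}(u,\cdot)\,d\nu_{L}$ exactly for each fixed small $\delta$ by writing $\nu_{L}$ in crossing coordinates $(s,t,\theta)$ with Jacobian $|\sin \theta |/(2\pi |M|)$; this recovers the paper's definition \eqref{eq:Fdelta} of $f_{\delta}$ together with the exact identity $\zz{E}f_{\delta}=\int_{M}f\,dx$, which is cleaner and more informative (it shows $A_{\delta}=A_{\varphi ;f}$ is an identity, not merely a $\delta \rightarrow 0$ consequence). The one point to tighten: the exactness of that pullback of Liouville measure is a Santal\'{o}-type formula (the transverse invariant measure on the set of unit vectors based along a fixed geodesic is exactly $|\sin \theta |\,ds\,d\theta /(2\pi |M|)$, obtained by contracting the Liouville volume with the geodesic spray in Fermi coordinates), and it is precisely what the paper avoids asserting --- in Lemma~\ref{lemma:H1} the rhombus area is claimed only asymptotically and is upgraded to an exact identity by the subdivision-plus-invariance trick. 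Your stated reason for exactness (``the sum \eqref{eq:local-smoothed} has at most one term'') addresses injectivity of the parametrization on the support of the integrand but not the Jacobian itself, so you should either prove the infinitesimal formula or invoke it explicitly; alternatively, the asymptotic version of the Jacobian combined with the paper's $\delta$-independence observation already yields the corollary without it.
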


\begin{proof}
Proposition~\ref{proposition:ergodic} implies that $N^{*}_{\varphi ;f}
(t)/t^{2}$ converges almost surely to $A_{\delta}$. But the
inequalities \eqref{eq:local-bounds} and the fact that the errors are
of order $O (t)$ imply that the limit constant does not depend on $\delta$.
\end{proof}

\subsection{Lead Eigenfunction}\label{ssec:leadEig}
The primary difference between the global intersection kernel
$K_{\delta}$ and the local kernel  $k_{\delta}$
is that the constant function $1$ is not, in general, an eigenfunction
of the local kernel. To see this, define
\begin{equation}\label{eq:Fdelta}
	f_{\delta} (u) := \frac{1}{\kappa_{\varphi}}\int_{SM}  k_{\delta} (u,v) \,\nu_{L} (dv).
\end{equation}

\begin{lemma}\label{lemma:FdeltaF}
\[
	\lim_{\delta  \rightarrow 0} \xnorm{f_{\delta}-f}_{\infty}=0.
\]
\end{lemma}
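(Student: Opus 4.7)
The plan is to compute the inner integral $\int_{SM} k_\delta(u,v)\,d\nu_L(v)$ explicitly for small $\delta>0$ by a kinematic change of variables, and then let $\delta\to 0$. This is essentially a localized version of the computation in Lemma~\ref{lemma:H1}, with the weight $f(x)$ at the intersection point now producing a nontrivial $u$-dependence.

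First, I would parametrize the vectors $v\in SM$ that contribute a nonzero term to \eqref{eq:local-smoothed}. By Lemma~\ref{lemma:well-def}, for $\delta$ sufficiently small there is at most one intersection of the two geodesic pieces of length $\delta$, and the contributing $v$'s are in bijection with triples $(s,t,\theta)$, where $s\in[-\delta,\delta]$ is the signed arclength along the geodesic through $u$ to the intersection point $x$, $t\in[-\delta,\delta]$ is the signed distance from the basepoint of $v$ to $x$, and $\theta\in(\alpha,2\pi-\alpha)$ is the crossing angle. As in the rhombus computation from the proof of Lemma~\ref{lemma:H1}, the Jacobian of the map $v\mapsto(s,t,\theta)$ relative to the Liouville measure $d\nu_L = dx\,d\theta_v/(2\pi|M|)$ is $|\sin\theta|/(2\pi|M|)$.

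Substituting into the definition of $k_\delta$, noting that $x = \pi\gamma(s;u)$ for the unique contributing term, gives
\[
\int_{SM} k_\delta(u,v)\,d\nu_L(v) = \frac{1}{2\pi|M|}\int_{-\delta}^{\delta}\!\int_{-\delta}^{\delta}\!\int_{0}^{2\pi} \delta^{-2} p(s/\delta)p(t/\delta) f(\pi\gamma(s;u))\varphi(\theta)|\sin\theta|\,d\theta\,dt\,ds.
\]
Rescaling $s=\delta\sigma$, $t=\delta\tau$, and using $\int_{-1}^{1}p(\tau)\,d\tau = 1$ together with the formula \eqref{eq:Kphi} for $\kappa_\varphi$, one obtains
\[
\int_{SM} k_\delta(u,v)\,d\nu_L(v) = \kappa_\varphi \int_{-1}^{1} p(\sigma)\, f(\pi\gamma(\delta\sigma;u))\,d\sigma,
\]
so that $f_\delta(u) = \int_{-1}^{1} p(\sigma)\,f(\pi\gamma(\delta\sigma;u))\,d\sigma$.

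Finally, since $SM$ is compact and $f$ is continuous, $f\circ\pi$ is uniformly continuous, and the geodesic flow $(\sigma,u)\mapsto\gamma(\delta\sigma;u)$ converges to the identity uniformly on $[-1,1]\times SM$ as $\delta\to 0$. Hence $f(\pi\gamma(\delta\sigma;u))\to f(u)$ uniformly in $(\sigma,u)$, and since $\int_{-1}^{1}p(\sigma)\,d\sigma=1$ we conclude $\|f_\delta - f\|_\infty \to 0$. The only nontrivial step is the Jacobian computation, but this is the standard Santaló-style kinematic formula already invoked implicitly in Lemma~\ref{lemma:H1}, so no genuine obstacle arises.
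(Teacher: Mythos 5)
Your argument is correct, but it takes a more computational route than the paper. The paper's proof is a two-line comparison: since the intersection point $x_{i}$ in \eqref{eq:local-smoothed} lies within distance $2\delta$ of the base point of $u$, uniform continuity of $f$ gives $k_{\delta}(u,v)=f(u)K_{\delta}(u,v)+E(u,v)$ with $|E(u,v)|\leq K_{\delta}(u,v)\max_{d(a,b)\leq 2\delta}|f(a)-f(b)|$; integrating in $v$ and invoking property (P1) ($K_{\delta}1=\kappa_{\varphi}$) immediately yields $|f_{\delta}(u)-f(u)|\leq \max_{d(a,b)\leq 2\delta}|f(a)-f(b)|$. You instead evaluate the integral by the kinematic change of variables $v\mapsto(s,t,\theta)$ and obtain the closed form $f_{\delta}(u)=\int_{-1}^{1}p(\sigma)f(\pi\gamma(\delta\sigma;u))\,d\sigma$, which is stronger than needed (it identifies $f_{\delta}$ as a smoothing of $f$ along the flow, not merely a uniform approximation) but requires the Santal\'o Jacobian. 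One caution: you describe that Jacobian as "already invoked implicitly in Lemma~\ref{lemma:H1}," but the paper deliberately avoids asserting the exact kinematic identity there --- it only proves the rhombus approximation asymptotically and then upgrades it by a subdivision argument. The exact formula $d\nu_{L}=|\sin\theta|\,ds\,dt\,d\theta/(2\pi|M|)$ is true, but if you prefer not to cite it, note that your proof survives with an asymptotic Jacobian $(1+o(1))|\sin\theta|/(2\pi|M|)$, since the resulting $o(1)\|f\|_{\infty}$ error is uniform in $u$ and harmless in the limit $\delta\to 0$. Net: your approach buys an explicit formula for $f_{\delta}$ at the cost of a geometric computation the paper's softer argument sidesteps entirely.
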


\begin{proof}
Because $k_{\delta}$ is non-zero only for pairs $u,v$ at distance $<2\delta$,
the value of $f$ in \eqref{eq:local-smoothed} will be close to $f (u)$
for all small $\delta$, uniformly for $u\in SM$. (Here we are viewing
$f$ as a function on $SM$.) Hence,
\[
	|f (u) K_{\delta} (u,v) - k_{\delta} (u,v)| \leq
	\max_{d (a,b)\leq \delta} |f (a)-f (b)|.
\]
Since $K_{\delta }1=\kappa_{\varphi}$, by (P1) of
section~\ref{ssec:smoothing}, the result follows from the continuity
of $f$.
\end{proof}

\subsection{Cohomology and the Central Limit Theorem}\label{ssec:coho}
By Lemma~\ref{lemma:FdeltaF}, if $f$ is not cohomologous to a
constant, then neither is $f_{\delta}$ provided $\delta >0$ is
sufficiently small. This follows trivially from the definition:

\begin{definition}\label{definition:cohomology}
A $C^{\infty}$ function $g:SM \rightarrow \zz{R}$ (or $\zz{C}$) is said
to be a \emph{coboundary} relative to the geodesic flow if it
integrates to zero along every closed geodesic; similarly, $g$ is said
to be \emph{cohomologous} to a constant $a$  if $g-a$ is a
coboundary. 
\end{definition}

 It is not so easy to find nonconstant functions that are cohomologous
to constants, but it is quite easy to construct a function $g$ that is
not cohomologous to a constant: Take two closed geodesics $\alpha$ and
$\beta$ that do not intersect\footnote{That there are non-intersecting
closed geodesics on any compact surface of \emph{constant} negative
curvature follows from the standard representation of such a surface
as a geodesic polygon in the hyperbolic plane with sides identified,
because non-crossing sides will be non-intersecting closed geodesics
after the identifications are made. It then follows that there are
non-intersecting closed geodesics on any compact surface of variable
negative curvature, because all variable-negative-curvature metrics on
a compact surface can be obtained by smooth deformation of the
constant curvature metric, and such deformations preserve transversal
intersections of closed geodesics. For more detail, see, for instance,
\cite{lalley:rigidity}.} on $M$, and let $g:M \rightarrow \zz{R}$ be
any $C^{\infty}$, nonnegative function that is identically $1$ along
$\alpha$ but vanishes in a neighborhood of $\beta$. In fact, the
existence of non-intersecting closed geodesics yields the existence of
a large class of functions that are not cohomologous to constants:

\begin{proposition}\label{proposition:plethora}
Let $\varepsilon >0$ be the distance in $M$ between two
non-intersecting closed geodesics $\alpha$ and $\beta$. Then no
$C^{\infty}$, nonnegative, function $g:M \rightarrow \zz{R}$ that is
not identically zero and whose support has diameter less than
$\varepsilon$ is cohomologous to a constant.
\end{proposition}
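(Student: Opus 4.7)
The plan is to argue by contradiction, assuming $g$ is cohomologous to some constant $a$, so that
\[
\int_{0}^{L(\gamma)} g(\gamma(t)) \, dt = a \cdot L(\gamma)
\]
for every closed geodesic $\gamma$ in $M$.

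First, I would use the diameter hypothesis to force $a=0$. If $\mathrm{supp}(g)$ met both $\alpha$ and $\beta$, then choosing $p\in\mathrm{supp}(g)\cap\alpha$ and $q\in\mathrm{supp}(g)\cap\beta$ would give $d(p,q)\le\mathrm{diam}(\mathrm{supp}\,g)<\varepsilon=d(\alpha,\beta)\le d(p,q)$, a contradiction. Hence $\mathrm{supp}(g)$ is disjoint from at least one of $\alpha,\beta$, say from $\beta$, so $\int_{\beta}g\,dt=0$, and $a\cdot L(\beta)=0$ forces $a=0$. Thus $g$ (viewed as a function on $SM$ via the base-point projection) must itself be a coboundary: it integrates to zero along every closed geodesic.

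Second, I would close the argument by showing that such a $g$ must integrate to zero against the Liouville measure $\nu_{L}$, which contradicts the positivity of $\int_{SM}g\,d\nu_{L}$. The positivity half is immediate: $g$ is $C^{\infty}$, nonnegative, and not identically zero on $M$, so it is strictly positive on a nonempty open subset; since $\nu_{L}$ has full support on $SM$ and projects to a positive multiple of Riemannian area on $M$, $\int_{SM}g\,d\nu_{L}>0$. For the vanishing half, I would invoke the specification property for the Anosov geodesic flow on a compact negatively curved surface: periodic orbit measures are weak-$*$ dense in the space of flow-invariant Borel probability measures on $SM$, so one can pick closed geodesics $\gamma_{n}$ with $\frac{1}{L(\gamma_{n})}\int_{\gamma_{n}}F\,dt\to\int_{SM}F\,d\nu_{L}$ for every continuous $F$; applying this with $F=g$ and using that each $\int_{\gamma_{n}}g\,dt=0$ yields $\int_{SM}g\,d\nu_{L}=0$. (Alternatively, Livsic's theorem for Anosov flows would produce a H\"older $h$ with $g=Xh$, which integrates to zero against every invariant measure.)

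I expect the only genuine technical input to be this "coboundary $\Rightarrow$ vanishing Liouville integral" implication, since it relies on either the specification property or Livsic's theorem for the geodesic flow; both are classical for Anosov flows on compact negatively curved surfaces but must be cited. The remainder of the argument is the short geometric observation that the diameter bound forces $\mathrm{supp}(g)$ to miss one of the two non-intersecting closed geodesics, together with the trivial positivity of $\int_{M}g\,dx$.
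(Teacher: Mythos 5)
Your proposal is correct, and its first half (using the diameter bound to force the support of $g$ to miss one of $\alpha,\beta$, hence forcing the constant $a$ to be $0$) is exactly the paper's argument. Where you diverge is in deriving the contradiction from $a=0$. The paper's route is more elementary: since closed geodesics are dense in $SM$, their projections are dense in $M$, so some closed geodesic $\xi$ passes through the open set where $g>0$; nonnegativity of $g$ then makes the average of $g$ along $\xi$ strictly positive, directly contradicting the assumption that $g$ integrates to zero along every closed geodesic. You instead pass through the Liouville measure, invoking either Sigmund-type weak-$*$ density of periodic orbit measures among invariant measures, or Livsic's theorem, to conclude that a function vanishing on all periodic orbits has $\int_{SM} g\, d\nu_{L}=0$, contradicting $\int_{SM} g\, d\nu_{L}>0$. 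Both are valid, and both ultimately lean on nonnegativity of $g$ (the paper to get a positive periodic average from a single well-placed orbit, you to get positivity of the Liouville integral). What your approach costs is the citation of a substantially heavier dynamical theorem where only topological density of periodic orbits is needed; what it buys is a reusable implication (coboundary $\Rightarrow$ zero integral against every invariant measure) that does not require locating any particular orbit inside $\{g>0\}$. For the statement at hand the paper's shortcut is the cleaner one, but your argument is sound.
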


\begin{proof}
By hypothesis, $g$ vanishes on at least one of the geodesics
$\alpha,\beta$. Because closed geodesics are dense in $SM$, their
projections are dense in $M$. Thus, since $g$ is not identically $0$,
there is a closed geodesic $\xi$ on which the average value of $g$ is
positive.
\end{proof}

The importance of the concept of cohomology to us is its relation to
the central limit theorem for the geodesic flow: If $g$ is \emph{not}
cohomologous to a constant, then there exists\footnote{See
\cite{ratner:clt} for the assertion that $\sigma_{f}>0$ if and only if $f$
is not cohomologous to a constant. The definition of cohomology used
there is different from ours, but it is easily shown that the two
definitions are equivalent.}
$\sigma =\sigma_{g}>0$ such that if $\gamma (0)$ is chosen at random
from the Liouville measure, then
\begin{equation}\label{eq:clt-coho}
	\frac{1}{\sigma \sqrt{t}}\left\{ \int_{0}^{t} g (\gamma (s))
	\,ds - \int_{SM} g \,d\nu_{L}\right\} \Longrightarrow \Phi .
\end{equation}

\subsection{Proof of Theorem~\ref{theorem:local}}\label{ssec:clt}
Rewrite the relation \eqref{eq:Nstar} as
\begin{equation}\label{eq:NstarRewrite}
	2N^{*}_{\varphi ;f} (\gamma [0,t]) =
		       2\kappa_{\varphi }\iint_{[0,t]^{2}} f_{\delta} (\gamma
		       (r)) \,dr ds - 2A_{\delta}t^{2}
		       +\iint_{[0,t]^{2}} \tilde{k}_{\delta} (\gamma
		       (r),\gamma (s)) \,dr ds 
\end{equation}
where 
\[
	\tilde{k}_{\delta} (u,v) = k_{\delta} (u,v) -\kappa_{\varphi } (f_{\delta}
	(u)+f_{\delta} (v)) +A_{\delta}.
\]
Corollary~\ref{corollary:Adelta} implies that $A_{\delta}=A_{\varphi
;f}$ for all small $\delta >0$.
By Proposition~\ref{proposition:plethora}, the function $f$ is not
cohomologous to a constant, and so by Lemma~\ref{lemma:FdeltaF}
neither is $f_{\delta}$, provided $\delta>0$ is sufficiently
small. Consequently, the fluctuations of the first integral in
\eqref{eq:NstarRewrite} are of order $t^{3/2}$, and the central limit
theorem implies that for $\sigma =\sigma_{f,\delta}>0$,
\[
	\frac{1}{\sigma t^{3/2}} 
	\left\{ \kappa_{\varphi }\iint_{[0,t]^{2}} f_{\delta} (\gamma
		       (r)) \,dr ds - A_{\varphi ;f}t^{2}\right\} 
		     \Longrightarrow \Phi .		      
\] 
On the other hand, since the kernel $\tilde{k}_{\delta}$ is
$C^{\infty}$, symmetric, and \emph{centered}, the fluctuations of the
second integral in \eqref{eq:NstarRewrite} are of order $t$, by
Theorem~\ref{theorem:approxConv}. Theorem~\ref{theorem:local}
follows. 
\qed

\bigskip \noindent 
\textbf{Acknowledgment.} Thanks to \textsc{Jayadev Athreya} for a
helpful discussion.
 
\bibliographystyle{plain}
\bibliography{mainbib}

%\begin{thebibliography}{99}
%
%\bibitem{lalley:growth} Lalley (1999) Growth profile and invariant
%measures for the weakly supercritical contact process on a homogeneous
%tree. \emph{Annals of Probability} \textbf{27} 206--225
%
%\end{thebibliography}

\end{document}